\newif\ifkeepslowthings
\definecolor{cb-yellow}{RGB}{221,170,51}
\definecolor{cb-red} {RGB}{187,85,102}
\definecolor{cb-teal}{RGB}{0,153,136}
\definecolor{cb-blue} {RGB}{0,68,136}
\definecolor{cb-green}{RGB}{17,119,51}
\definecolor{cb-purple} {RGB}{170,68,153}
\definecolor{cb-palegrey} {RGB}{221,221,221}
\pgfplotsset{compat=1.18}
\newtheorem{theorem}{Theorem}
\numberwithin{theorem}{section}
\newtheorem{proposition}[theorem]{Proposition}
\newtheorem{lemma}[theorem]{Lemma}
\newtheorem{conjecture}[theorem]{Conjecture}
\newtheorem{definition}[theorem]{Definition}
\theoremstyle{remark}
\newtheorem{remark}[theorem]{Remark}
\newenvironment{example}
{\pushQED{\qed}\examplex}
{\popQED\endexamplex}
\newcommand{\colS}{red}
\newcommand{\colT}{yellow}
\newcommand{\GG}{\mathcal{G}}
\newcommand{\GGA}{\mathcal{G}^\star}
\newcommand{\PP}{\mathcal{P}}
\DeclareMathOperator{\SG}{{\mathbb S}}
\newcommand{\R}{\mathbb{R}}
\newcommand{\Q}{\mathbb{Q}}
\newcommand{\C}{\mathbb{C}}
\newcommand{\Z}{\mathbb{Z}}
\newcommand{\dd}{\mathrm{d}}
\DeclareMathOperator{\crit}{crit}
\DeclareMathOperator{\Aut}{Aut}
\newcommand{\ol}{\Lambda}
\newcommand{\bigO}{\mathcal{O}}
\renewcommand{\d}{\mathrm{d}}
\newcommand{\G}{\mathcal{G}}
\newcommand\restr[2]{{
  \left.\kern-\nulldelimiterspace 
  #1
  \vphantom{\big|} 
  \right|_{#2}
  }}
\title{Bivariate exponential integrals and edge-bicolored graphs}
  \author{Michael Borinsky}
  \address{%
  ETH Institute for Theoretical Studies \& \\
  Perimeter Institute \\
\email{michael.borinsky@eth-its.ethz.ch}
}
  \author{Chiara Meroni}
  \address{%
  ETH Institute for Theoretical Studies \\
\email{chiara.meroni@eth-its.ethz.ch}
}
\author{Maximilian Wiesmann}
\address{%
    Max Planck Institute for Mathematics in the Sciences \\
\email{wiesmann@mis.mpg.de}
}
\begin{document}

\begin{abstract}
We show that specific exponential bivariate integrals serve as generating functions of labeled edge-bicolored graphs.
Based on this, we prove an asymptotic formula for the number of regular edge-bicolored graphs with arbitrary weights assigned to different vertex structures. The asymptotic behavior is governed by the critical points of a polynomial.
As an application, we discuss the Ising model on a random 4-regular graph and show how its phase transitions arise from our formula.
\end{abstract}

\section{Introduction}
In this article we study the following family of bivariate integrals,
\begin{equation}
\label{equ:exponentialIntegral}
I(z) = \frac{z}{2\pi}
\int_D \exp\left( {z\, g(x,y)} \right) \, \dd x \dd y\,,
\end{equation}
where $D$ is some subset of $\R^2$, $g$ is a function $D\rightarrow \R$ fulfilling specific conditions and $z$ is a large positive number. Integrals as $I(z)$ arise naturally in two important applications. First, they appear in Bayesian statistics as \emph{marginal likelihood integrals} (see, e.g.,\ \cite[\S 1]{watanabe2009algebraic}). Second, they are \emph{path integrals} associated to a zero-dimensional quantum system with two interacting fields parametrized by $x$ and $y$, whose action is given by $g(x,y)$ (see, e.g.,~\cite[\S 2]{skinner2018quantum} or \cite{bessis1980quantum}). The setups might differ, however, in the integration domain $D$, leading to different asymptotic behaviors (see \cite{lin2011algebraic} for an asymptotic analysis in the realm of statistics).
Edge-bicolored graphs are central objects in extremal graph theory (see, e.g., \cite[Ch.~V]{bollobas2004extremal}) and their (asymptotic) enumeration is a subject with a long history  (see, e.g., \cite{MR3966531} and the references therein).\par 

We will explain that the coefficients of the large-$z$ asymptotic expansion of $I(z)$ count weighted edge-bicolored graphs.  Each graph is weighted by the reciprocal of the order of its automorphism group and the product of an arbitrary set of parameters assigned to each bicolored incidence structure of a vertex.  We do so by proving a bivariate version of the Laplace method in Section~\ref{sec:laplace}, before interpreting the coefficients of the asymptotic expansion combinatorially in Section~\ref{sec:graphs}.  Therefore, we may interpret $I(z)$ as a \emph{generating function} of edge-bicolored graphs (Theorem \ref{thm:laplace-graphs}).
In Section~\ref{sec:algo}, we derive an effective algorithm for the computation of these coefficients. In the final Section~\ref{sec:asymptotics}, we prove an asymptotic formula for the weighted number of \emph{regular} edge-bicolored graphs, in the limit where the number of edges and vertices goes to infinity.
Our main result Theorem \ref{thm:crit_g} relates this asymptotic formula to the critical points of the polynomial $g(x,y)$ whose shape is governed by the vertex incidence structure of the graphs. We showcase that, unlike the monochromatic case, which has previously been discussed in \cite[Ch.~3]{Borinsky:2018mdl}, only critical points satisfying some reality constraints contribute to the asymptotics.\par 

Throughout the text we illustrate our statements through the example of the Ising model on a random 4-regular graph. The Ising model is a central object of study in mathematical physics (see, e.g., \cite{MR4680248}). The relationship between our combinatorial approach and this model is explained in Remark~\ref{rmk:ising}.\par\medskip

\textbf{Acknowledgements.} 
MW thanks the ETH Zürich Institute for Theoretical Studies for generous support during two research visits.
MB and CM are supported by Dr.\ Max Rössler, the Walter Haefner Foundation, and the ETH Zürich Foundation. 
Research at Perimeter Institute is supported in part by the Government of Canada
through the Department of Innovation, Science and Economic Development and by
the Province of Ontario through the Ministry of Colleges and Universities.

\section{Laplace method and asymptotic expansions}
\label{sec:laplace}
We will start by using the Laplace method to study the large $z$ behavior of 
the integral $I(z)$ defined in \eqref{equ:exponentialIntegral}.
We require the data $D$, and $g: D\rightarrow \R$ determining $I(z)$ to be chosen such that
\begin{enumerate}
\item the integral $I(z)$ exists for $z > 0$, 
\item 
$D$ contains a neighborhood of the origin,
\item 
$g$ attains its unique supremum $\sup_{(x,y)\in D}g(x,y) \!=\! g(\mathbf{0})$ at the origin,
\item 
near the origin, $g$ has a converging  expansion of the form
\begin{equation}
\label{eq:gdef}
g(x,y) = - \frac{x^2}{2} - \frac{y^2}{2} + \sum_{\substack{u,w \geq 0\\ u+w \geq 3}} \ol_{u,w} \frac{x^u y^w}{u!w!}\, .
\end{equation}
\end{enumerate}
The last condition ensures that \eqref{equ:exponentialIntegral} resembles a Gaussian integral 
when $x$ and $y$ are small. This observation allows to approximate $I(z)$ by a slightly perturbed Gaussian
when $z$ is large.\par 

We define a family of polynomials $a_{s,t}$ indexed by integers $s,t \geq 0$ in a two-fold infinite set of variables $\lambda_{u,w}$ indexed by $u,w \geq 0$ with $u+w \geq 1$.  Let $\mathcal R$ be the ring of polynomials in these variables, i.e., $\mathcal R = \Q[\lambda_{0,1},\lambda_{1,0},\lambda_{1,1}, \lambda_{0,2}, \ldots]$.
The polynomials $a_{s,t}(\lambda) \in \mathcal R$ are defined by the generating function
\begin{equation}
\label{eq:coeffs_extraction_gen_funct}
    \sum_{s,t \geq 0} a_{s,t}(\lambda)\, x^s y^t =
\exp\left(\sum_{\substack{u,w \geq 0\\u+w \geq 1}} \lambda_{u,w} \frac{x^u y^w}{u!w!}\right) \in \mathcal R[[x,y]].
\end{equation}
For instance,
$a_{0,0}(\lambda) = 1$,
$a_{1,0}(\lambda) = \lambda_{1,0}$,
and 
$a_{2,0}(\lambda) = \frac12 ( \lambda_{2,0} + \lambda_{1,0}^2 )$.

We will relate the \emph{asymptotic expansion} of $I(z)$ for large $z$ to the polynomials $a_{s,t}$.
For a given function $h(z)$, the set $\bigO(h(z))$ consists of all functions
$f(z)$ for which $\limsup_{z\rightarrow \infty} \left|{f(z)}/{h(z)}\right|$ is finite. 
The notation $f(z) = g(z) + \bigO(h(z))$ means that $f(z)-g(z) \in \bigO(h(z))$. 
The asymptotic expansion notation
$f(z) \sim \sum_{n \geq 0} g_n(z)$  means that  $f(z) - \sum_{n=0}^{R-1} g_n(z) \in \bigO(g_{R}(z))$
for all $R \geq 0$.

\begin{proposition}
\label{prop:laplace}
If $I(z)$, $g$, $D$ and the coefficients $\ol_{u,w}$ are related as above, then
\[
I(z) \sim \sum_{n\geq 0} A_n z^{-n},
\]
for large $z$, where $A_n$ is the coefficient of $z^{-n}$ in the formal power series
\[
    \sum_{s,t\geq 0} z^{-(s+t)} (2s-1)!! \cdot (2t-1)!! \cdot a_{2s,2t} (z \cdot \ol) \in \R[[z^{-1}]]\,,
\]
where $(2s-1)!! = (2s-1)(2s-3)\cdots 3\cdot 1$ and $a_{2s,2t} (z \cdot \ol)$ is the polynomial $a_{2s,2t}(\lambda)$ defined in \eqref{eq:coeffs_extraction_gen_funct},
with 
\begin{equation}
    \label{eq:substitution_lambda_Lambda}
    \lambda_{u,w} = \left\{ \begin{array}{ll}
        0 & \text{for}~ u,w\geq 0 ~\text{and}~ 1 \leq u+w < 3, \\
        z\,\Lambda_{u,w} & \text{for}~ u,w\geq 0 ~\text{and}~ u+w \geq 3 .\\
    \end{array} \right.
\end{equation}
\end{proposition}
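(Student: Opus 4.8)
The plan is to carry out the multivariate Laplace method by reducing the two-dimensional integral to a perturbed Gaussian and expanding the perturbation term-by-term. First I would split the integration domain: fix a small $\delta>0$ such that the ball $B_\delta$ of radius $\delta$ around the origin is contained in $D$ and such that the convergent expansion \eqref{eq:gdef} is valid there. Since $g$ attains its unique supremum $g(\mathbf 0)$ at the origin and, by the quadratic leading term, $g(x,y) \le -c(x^2+y^2)$ on $B_\delta$ for some $c>0$ after possibly shrinking $\delta$, the contribution of $D \setminus B_\delta$ is bounded by $\bigO(e^{-\varepsilon z})$ for some $\varepsilon>0$ and is therefore subleading to every power of $z^{-1}$; hence it does not affect the asymptotic expansion. (Here I would use the normalization $g(\mathbf 0)=0$, which we may assume after factoring out $e^{zg(\mathbf 0)}$ — or note the conditions already force it via the expansion \eqref{eq:gdef}.) So it suffices to analyze $\frac{z}{2\pi}\int_{B_\delta} e^{zg(x,y)}\,\dd x\,\dd y$.

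On $B_\delta$ I would write $g(x,y) = -\tfrac{x^2}{2}-\tfrac{y^2}{2} + p(x,y)$ with $p(x,y) = \sum_{u+w\ge 3}\ol_{u,w}\frac{x^u y^w}{u!w!}$, then rescale $x \mapsto x/\sqrt z$, $y\mapsto y/\sqrt z$. The Jacobian cancels the prefactor $z/(2\pi)$ up to the Gaussian normalization, giving $\frac{1}{2\pi}\int_{B_{\delta\sqrt z}} e^{-(x^2+y^2)/2}\, e^{z\,p(x/\sqrt z,\,y/\sqrt z)}\,\dd x\,\dd y$. The key observation is that $z\,p(x/\sqrt z, y/\sqrt z) = \sum_{u+w\ge 3}\ol_{u,w} z^{1-(u+w)/2}\frac{x^u y^w}{u!w!}$, which is a formal series in $z^{-1/2}$ whose coefficients are polynomials in $x,y$; writing $\lambda_{u,w} = z\,\ol_{u,w}$ for $u+w\ge 3$ and $\lambda_{u,w}=0$ for $1\le u+w<3$, this exponent is exactly $\sum \lambda_{u,w} \frac{(x/\sqrt z)^u (y/\sqrt z)^w}{u! w!}$ evaluated under the substitution \eqref{eq:substitution_lambda_Lambda}. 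Expanding $\exp$ and invoking the definition \eqref{eq:coeffs_extraction_gen_funct} of $a_{s,t}$, the integrand's formal expansion is $\sum_{s,t\ge0} a_{s,t}(z\cdot\ol)\, z^{-(s+t)/2} x^s y^t e^{-(x^2+y^2)/2}$. Integrating against the Gaussian, the odd moments vanish and $\frac{1}{\sqrt{2\pi}}\int_\R x^{2s} e^{-x^2/2}\,\dd x = (2s-1)!!$, so the formal answer is $\sum_{s,t\ge0} z^{-(s+t)}(2s-1)!!(2t-1)!!\,a_{2s,2t}(z\cdot\ol)$, matching the claimed series; extracting the coefficient of $z^{-n}$ (noting $a_{2s,2t}(z\cdot\ol)$ is a polynomial in $z$, so each $A_n$ is a finite sum) gives the $A_n$.

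The main obstacle is making the formal manipulation rigorous as an \emph{asymptotic} statement: the series $\exp(z\,p(x/\sqrt z,\dots))$ does not converge uniformly on $B_{\delta\sqrt z}$, and one cannot simply integrate term by term. The standard remedy, which I would follow, is to Taylor-expand $e^{z p(x/\sqrt z, y/\sqrt z)}$ to finite order $N$ with an explicit remainder, bound the remainder using that $|p(x,y)| \le C(x^2+y^2)^{3/2}$ on $B_\delta$ (so that after rescaling the remainder is dominated by a fixed power of $z^{-1/2}$ times a Gaussian-integrable polynomial), and separately bound the tail integral over $B_{\delta\sqrt z}^c \cap B_{\delta\sqrt z}$ — wait, rather the portion $\|(x,y)\|\in(\eta\sqrt z,\delta\sqrt z)$ for the finitely many polynomial terms, which again decays faster than any power of $z$. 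Care is needed that the remainder estimate holds uniformly: one should restrict to $\|(x,y)\| \le z^{1/6}$ or similar where $z\,p(x/\sqrt z,y/\sqrt z)$ stays bounded, absorbing the complementary region into the exponentially small error via the Gaussian weight. Once the remainder is controlled at every order $N$, matching powers of $z^{-1/2}$ and observing that only even total degrees $s+t$ survive the Gaussian integration (forcing half-integer powers to drop out) yields the asymptotic expansion in integer powers $z^{-n}$ with coefficients $A_n$ as stated. I would also remark that convergence of the expansion \eqref{eq:gdef} is used only to guarantee $p$ is analytic near $\mathbf 0$ with the stated bound; the $C^\infty$ version of Laplace's method would suffice, but the analytic hypothesis makes the remainder bookkeeping cleaner.
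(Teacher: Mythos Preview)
Your plan is correct and follows essentially the same Laplace-method approach as the paper: localize near the origin, Taylor-expand the non-Gaussian factor to finite order with remainder, extend the Gaussian integral back to $\R^2$, and read off moments. Two minor points to watch when filling in details: (i) the paper localizes to a \emph{shrinking} box $[-z^{-\gamma},z^{-\gamma}]^2$ with $\gamma\in(\tfrac13,\tfrac12)$ strictly, so your rescaled cutoff $z^{1/6}$ (which corresponds to $\gamma=\tfrac13$) should be taken as $z^{\beta}$ with $\beta<\tfrac16$ to make the remainder actually vanish; (ii) for the tail over $D\setminus B_\delta$ you cannot simply bound by $e^{-\varepsilon z}\cdot\mathrm{vol}(D\setminus B_\delta)$ since $D$ need not be bounded---the paper instead uses $\int_{D\setminus B(z)} e^{zg}\le e^{(z-1)\delta(z)}\int_D e^{g}$, invoking the assumed finiteness of $I(1)$.
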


The proof of this proposition uses the classical \emph{Laplace method} which gives an expression for the asymptotic expansion of the integral $I(z)$.  See, e.g., \cite[Appendix A]{borinsky2019euler} for the proof of the one-dimensional case.

\begin{proof}
Fix an integer $R \geq 0$ and any value for $\gamma \in \left(\frac13,\frac12\right)$.
We first prove that the integral $I(z)$ is concentrated in the square $B(z) = [-z^{-\gamma},z^{-\gamma}]^2 \subset D$ that shrinks for growing $z$.
Let $\delta(z) = \max_{(x,y) \in D \setminus B(z)} g(x,y)$, then
    \begin{gather*}
    \left\vert I(z) - \frac{z}{2\pi} \int_{B(z)} \exp(z g(x,y)) \d x \d y \right\vert 
    = \frac{z}{2\pi} \int_{D\setminus B(z)} \exp(z g(x,y)) \d x \d y \\
    \leq \frac{z}{2\pi} \exp\left((z-1) \delta(z)\right) \int_{D} \exp(g(x,y)) \d x \d y \,.
    \end{gather*}
The last integral is finite by requirement.
As the origin is the unique global maximum of $g$ in $D$,
the maximal value $\delta(z)$ will be attained on the boundary of the square
$B(z)$ if $z$ is sufficiently large. Near the origin $g(x,y)$ behaves as \hbox{$-\frac{x^2}{2}-\frac{y^2}{2} +\textrm{(higher order terms)}$}, so $\delta(z)= -\frac{1}{2}z^{-2\gamma} + \bigO(z^{-3\gamma})$. Hence,  
    \begin{gather}
    \label{eq:Iz1}
    I(z) =  \frac{z}{2\pi} \int_{B(z)} \exp(z g(x,y)) \d x \d y  + \bigO(z \exp( -z^{1-2\gamma})).
    \end{gather}
As $\gamma < \frac12$, we have, in particular, $\bigO(z \exp( -z^{1-2\gamma})) \subset \bigO(z^{-R})$.
So, for the purpose of finding the asymptotic expansion of $I(z)$ in decreasing powers $z^{0},$ $z^{-1}, z^{-2},\ldots,z^{-R+1}$, integrating only over $B(z)$ as in \eqref{eq:Iz1} is sufficient.

Note that $a_{s,t}(z \cdot \ol)$ is a polynomial of degree at most $\frac{s+t}{3}$ in $z$. The function $\exp(z (\frac{1}{2}x^2+\frac{1}{2}y^2+g(x,y)))$ is analytic for all $(x,y) \in B(z)$. Therefore, for each $K \geq 0$, there is a constant $C> 0$ such that 
    \begin{align*}
        & \left\lvert  \exp\left(z \sum_{\substack{u,w \geq 0\\ u+w \geq 3}} \ol_{u,w} \frac{x^u y^w}{u!w!} \right) - \sum_{\substack{s,t \geq 0\\ s+t < K}} a_{s,t}(z \cdot \ol) x^{s} y^t \right\rvert
         \leq C z^{\frac{1}{3}K-\gamma K} \text{ for }  (x,y) \in B(z).
    \end{align*}
Next, we fix $K=\frac{3R}{3\gamma -1} \geq 0$ so that 
$z^{\frac{1}{3}K-\gamma K} = z^{-R}$, and use \eqref{eq:Iz1} to get
\begin{align}
\label{eq:Iz2}
    I(z) =  \frac{z}{2\pi} \sum_{\substack{s,t \geq 0\\ s+t < K}}  a_{s,t}(z \cdot \ol) 
\int_{B(z)} e^{-z\frac{x^2}{2} - z\frac{y^2}{2} }  
x^{s} y^t
\d x \d y  + \bigO(z^{-R}).
\end{align}
We want now to extend the integration domain to the whole real plane. For any integer $s \geq 0$, consider the integral
$$
\int_{z^{-\gamma}}^\infty e^{-z\frac{x^2}{2}}x^s \d x =
\exp\left({-\frac{z^{1-2\gamma}}{2}}\right)\int_{0}^\infty \exp\left({-z\frac{x^2}{2} -z^{1-\gamma} x}\right)(z^{-\gamma}+x)^s \d x.
$$
For fixed $z$, the function $x\mapsto \exp({-z^{1-\gamma} x})(z^{-\gamma}+x)^s$ attains its unique maximum at
$x=x_\mathrm{max} = sz^{\gamma-1} -z^{-\gamma}$. If $z$ is sufficiently large we have $x_\mathrm{max} \leq 0$. Hence, in the range we are interested in, the integral is decreasing in $x$, and using $\sqrt{\frac{z}{2\pi}}\int_\R e^{-z \frac{x^2}{2}} \d x = 1$ we get
$$
\sqrt{\frac{z}{2\pi}}
\int_{z^{-\gamma}}^\infty e^{-z\frac{x^2}{2}}x^s \d x 
\in \bigO\left( 
z^{-\gamma s}\,
\exp\left({-\frac{z^{1-2\gamma}}{2}}\right)
\right) \subset \bigO(z^{-R}).
$$
Combining this with \eqref{eq:Iz2} shows that 
\begin{align*}
    I(z) =  \frac{z}{2\pi} \sum_{\substack{s,t \geq 0\\ s+t < K}}  a_{s,t}(z \cdot \ol) 
\int_{\R^{2}} e^{-z\frac{x^2}{2} - z\frac{y^2}{2} }  
x^{s} y^t
\d x \d y  + \bigO(z^{-R}).
\end{align*}
Using the Gaussian integral identities $\sqrt{\frac{z}{2\pi}}\int_\R e^{-z \frac{x^2}{2}}x^{2s} \d x = z^{-s}\cdot (2s-1)!!$
and $\int_\R e^{-z \frac{x^2}{2}}x^{2s+1} \d x=0$ for all integers $s\geq 0$, proves the statement.
\end{proof}

\begin{example}
\label{ex:Ising_model_deg4_part1}
Fix $D = [-1,1]^2$ and $g(x,y) = -\frac{x^2}{2} - \frac{y^2}{2} + \frac{x^4}{4!} + \lambda \frac{x^2 y^2}{4} + \lambda^2 \frac{y^4}{4!}$ with $\lambda$ some arbitrary constant in $\R$. 
The conditions for Proposition~\ref{prop:laplace} are fulfilled and the associated integral $I(z)$ as defined in \eqref{equ:exponentialIntegral} has an asymptotic expansion
$ I(z) \sim \sum_{n \geq 0} A_n z^{-n}$.
Using the formula from Proposition~\ref{prop:laplace} and the generating function for the polynomials $a_{s,t}$ from \eqref{eq:coeffs_extraction_gen_funct}, we find that $A_0=1$ and
    \begin{align*}
        A_1 &= \frac{1}{8} + \frac{1}{4}\lambda + \frac{1}{8}\lambda^2, \\
        A_2 &= \frac{35}{384} + \frac{5}{32}\lambda + \frac{19}{64}\lambda^2 + \frac{5}{32}\lambda^3 + \frac{35}{384}\lambda^4,\\
        A_3 &= \frac{385}{3072} + \frac{105}{512}\lambda + \frac{1295}{3072}\lambda^2 + \frac{175}{256}\lambda^3 + \frac{1295}{3072}\lambda^4 + \frac{105}{512}\lambda^5 + \frac{385}{3072}\lambda^6.
    \qedhere
    \end{align*}
\end{example}

In the next section, we will endow the obtained analytic expressions with a combinatorial interpretation. This process is inspired by quantum field theory in physics, where perturbative expansions of observables, which are combinatorially controlled via \emph{Feynman graphs}, relate to \emph{path integrals}. The integral in \eqref{equ:exponentialIntegral} can be seen as a zero-dimensional path integral. The associated Feynman graphs are \emph{edge-bicolored graphs}.

\section{Edge-bicolored graphs}
\label{sec:graphs}

A \emph{graph} is a one-dimensional, finite CW complex. It is \emph{edge-bicolored} if each edge has one of two different colors. We will represent graphs using only discrete data.
A \emph{(set) partition} $P$ of a finite set $H$ is a set of non-empty and mutually disjoint subsets of $H$
whose union equals $H$. The elements of $P$ are called \emph{blocks}.
\begin{definition}
\label{def:bigraph}
Given two disjoint finite sets $S$ and $T$ of labels, an \emph{$[S,T]$-labeled edge-bicolored graph} is a tuple $\Gamma = (V,E_S,E_T)$, where
\begin{enumerate}
\item the \emph{vertex set} $V$ is a partition of $S\sqcup T$,
\item $E_S$ is a partition of $S$ into blocks of size 2,
\item $E_T$ is a partition of $T$ into blocks of size 2.
\end{enumerate}
\end{definition}
We think of the elements of $S$ and $T$ as \emph{half-edge labels} colored 
\colS{} and \colT{}, respectively.
These half-edges are bundled together in vertices via the partition $V$. 
The edge sets $E_S$ and $E_T$ pair the half-edges into edges of the respective color. Every edge-bicolored graph without isolated vertices can be represented by at least one $[S,T]$-labeled graph.
All graphs in this article will be edge-bicolored, so we will drop this adjective from now on. 

    \begin{figure}
        \centering
        \scalebox{0.5}{\begin{tikzpicture}[x=1ex,y=1ex,baseline={([yshift=-1.5ex]current bounding box.center)}]
    \coordinate (v);

    \coordinate [left=8 of v] (vm1);
    \coordinate [left=3 of vm1] (v01);
    \coordinate [right=3 of vm1] (v11);
    \draw[cb-red, line width = 3.5] (v01) circle(3);
    \draw[cb-red, line width = 3.5] (v11) circle(3);
    \filldraw (vm1) circle (3pt);

    \coordinate [right=8 of v] (vm2);
    \coordinate [left=3 of vm2] (v02);
    \coordinate [right=3 of vm2] (v12);
    \draw[cb-red, line width = 3.5] (v02) circle(3);
    \draw[cb-yellow, line width = 3.5] (v12) circle(3);
    \filldraw (vm2) circle (3pt);
\end{tikzpicture}
        \caption{An edge-bicolored graph with two connected components.}
    \label{fig:extwotadpole2s_rrry}
    \end{figure}
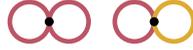

\begin{example}
    Let $S = \{s_1,s_2,\dots,s_6\}$ and $T = \{t_1,t_2\}$. The partitions 
    \begin{align*}
        V & = \left\{ \{s_1,s_2,s_3,s_4\}, \{s_5,s_6,t_1,t_2\} \right\}, \\
        E_S & = \left\{ \{s_1,s_2\}, \{s_3,s_4\}, \{s_5,s_6\} \right\}, \\
        E_T & = \left\{ \{t_1,t_2\} \right\},
    \end{align*}
    form an $[S,T]$-labeled graph representing the graph depicted in Figure \ref{fig:extwotadpole2s_rrry}.
\end{example}

An \emph{isomorphism} from an $[S_1,T_1]$-labeled graph $(V^{1},E_S^{1},E_T^{1})$ to an $[S_2,T_2]$-labeled graph $(V^{2},E_S^{2},E_T^{2})$
is a pair of bijections $j_S:S_1\rightarrow S_2$, $j_T:T_1\rightarrow T_2$ such that $j(V^{1}) = V^{2}, j(E_S^{1}) = E_S^{2}$, and $j(E_T^{1}) = E_T^{2}$
with $j$ being the map that $j_S$ and $j_T$ canonically induce on the subsets of $S$, $T$, and $S\sqcup T$.
An \emph{automorphism} of an $[S,T]$-lab\-eled graph $\Gamma$ is an isomorphism to itself. Those form the group $\Aut(\Gamma)$.
\begin{lemma}
\label{lmm:aut}
Each $[\{1,\ldots,2s\},\{1,\ldots,2t\}]$-labeled graph $\Gamma$ belongs to an isomorphism class of such graphs of size $\frac{(2s)!(2t)!}{|\Aut(\Gamma)|}$.
\end{lemma}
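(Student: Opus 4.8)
The plan is to deduce the statement from the orbit–stabilizer theorem applied to a natural group action. Let $G = \SG_{2s}\times\SG_{2t}$ and let $X$ be the (finite) set of all $[\{1,\ldots,2s\},\{1,\ldots,2t\}]$-labeled graphs. A pair $(\sigma,\tau)\in G$ acts on $\Gamma = (V,E_S,E_T)\in X$ by relabeling the \colS{} half-edges via $\sigma$ and the \colT{} half-edges via $\tau$, i.e. $(\sigma,\tau)\cdot\Gamma = (j(V),j(E_S),j(E_T))$, where $j$ is the map induced by $\sigma$ and $\tau$ on subsets of $S$, $T$, and $S\sqcup T$, exactly as in the definition of isomorphism. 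First I would check that this is a well-defined action on $X$: since $\sigma$ and $\tau$ are bijections, $j$ carries partitions to partitions and preserves block sizes, so $(\sigma,\tau)\cdot\Gamma$ is again an element of $X$; and the identity acts trivially while $j$ is compatible with composition, giving the action axioms.

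Next I would identify the orbit and stabilizer of a fixed $\Gamma\in X$. By the definition of isomorphism of labeled graphs, when the label sets agree on both sides, namely $S_1=S_2=\{1,\ldots,2s\}$ and $T_1=T_2=\{1,\ldots,2t\}$, an isomorphism is precisely a pair of bijections of these sets, i.e. an element of $G$. Hence the orbit $G\cdot\Gamma$ is exactly the isomorphism class of $\Gamma$ inside $X$, which is the "isomorphism class of such graphs" referred to in the statement. Similarly, the stabilizer $G_\Gamma = \{g\in G : g\cdot\Gamma = \Gamma\}$ is, by definition, exactly $\Aut(\Gamma)$.

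The orbit–stabilizer theorem then yields $|G\cdot\Gamma| = |G|/|G_\Gamma| = (2s)!\,(2t)!/|\Aut(\Gamma)|$, which is the claimed size. I do not expect a genuine obstacle here; the proof is a bookkeeping exercise. The one point that warrants care is making sure the notion of "isomorphism class" in the statement is the one furnished by Definition~\ref{def:bigraph}'s isomorphisms with the label sets held fixed setwise — that is, orbits under $G$ rather than under some larger symmetry — and the paragraph above records precisely this identification.
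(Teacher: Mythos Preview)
Your proof is correct and matches the paper's own argument essentially line for line: act by $\SG_{2s}\times\SG_{2t}$ on labeled graphs, identify the orbit with the isomorphism class and the stabilizer with $\Aut(\Gamma)$, and apply orbit--stabilizer. The only cosmetic difference is that the paper lets the group act on the isomorphism class $\mathrm{lab}(\Gamma)$ directly rather than on the set of all labeled graphs, but this changes nothing.
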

\begin{proof}
For given $\Gamma$, let $\mathrm{lab}(\Gamma)$ be the set of $[\{1,\ldots,2s\},\{1,\ldots,2t\}]$-labeled graphs that are isomorphic to $\Gamma$. The group $\SG_{2s}\times \SG_{2t}$ acts on $\mathrm{lab}(\Gamma)$ by permuting the half-edge labels of the respective color. $\Aut(\Gamma)$ is the subgroup of $\SG_{2s}\times \SG_{2t}$ stabilizing $\Gamma$. The lemma follows from the orbit stabilizer theorem.
\end{proof}

\begin{example}
    An $[S,T]$-labeled graph $\Gamma$ representing the graph depicted in Figure \ref{fig:extwotadpole2s_rrry} has automorphism group isomorphic to $\left(
    \SG_2 \times \SG_2 \rtimes \SG_2 \times \SG_2 \right) \times \SG_2 \leq \SG_6 \times \SG_2$, where $\rtimes$ denotes the semidirect product of groups, $\SG_6$ refers to the six {\colS} half-edges in $S$ and $\SG_2$ to the two {\colT} half-edges in $T$.
\end{example}

We write $\GG$ for the set of isomorphism classes of graphs. For each $G \in \GG$, we write $V^G,E_S^G,E_T^G,E^G=E_S^G\sqcup E_T^G$ and $\Aut(G)$ 
for the respective set or group of some $[S,T]$-labeled representative of $G$.
The \emph{Euler characteristic} of $G$ is defined by $\chi(G) = |V^G|-|E^G|$.
The \emph{bidegree} of a graph's vertex $v\in V^G$ is the pair of integers $\deg(v) = (u,w)$ where $u$ counts the number of half-edges in $v$ that lie in the \colS{}-colored set $S$ and $w$ the half-edges in the \colT{}-colored part $T$. The \emph{vertex degree} of $v$ is $|\deg(v)| = u+w$.

\begin{proposition}
\label{prop:genfun2colgraphs}
The generating function for graphs with marked bidegrees is
\begin{gather*}
        \sum_{G\in \G} \frac{\eta^{|E^G|} }{
\left\lvert \Aut(G) \right\rvert} 
\prod \limits _{v\in V^G}
\lambda_{\deg(v)}
= \sum_{s,t\geq 0} \eta^{s+t} \cdot  (2s-1)!!\cdot (2t-1)!!\cdot
a_{2s,2t}(\lambda) \in \mathcal R[[\eta]],
\end{gather*}
where $a_{s,t}$ is defined as in \eqref{eq:coeffs_extraction_gen_funct}.
\end{proposition}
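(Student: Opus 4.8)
The plan is to reduce the statement to a count of \emph{labeled} graphs via Lemma~\ref{lmm:aut}, to factor that count into an independent choice of the two colored edge-matchings and of the vertex partition, and then to identify the vertex-partition contribution with $a_{2s,2t}$ through the exponential formula. Note first that the left-hand side is a well-defined element of $\mathcal R[[\eta]]$: for each $n$ there are finitely many $[S,T]$-labeled graphs on any fixed pair of label sets, hence finitely many $G\in\G$ with $|E^G|=n$, and each contributes a single $\lambda$-monomial, so the coefficient of $\eta^n$ is a polynomial.

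\emph{Delabeling and factoring.} I would split $\sum_{G\in\G}$ according to the numbers $s$ and $t$ of {\colS}- and {\colT}-colored edges, so $|E^G|=s+t$. The weight $\prod_{v\in V^G}\lambda_{\deg(v)}$ depends only on the isomorphism class, and by Lemma~\ref{lmm:aut} a class with these edge counts is represented by exactly $\tfrac{(2s)!(2t)!}{|\Aut(G)|}$ many $[\{1,\dots,2s\},\{1,\dots,2t\}]$-labeled graphs; summing the constant weight over each class gives
\[
  \sum_{G\in\G}\frac{\eta^{|E^G|}}{|\Aut(G)|}\prod_{v\in V^G}\lambda_{\deg(v)}
  =\sum_{s,t\ge 0}\frac{\eta^{s+t}}{(2s)!(2t)!}\sum_{\Gamma}\ \prod_{v\in V^\Gamma}\lambda_{\deg(v)},
\]
the inner sum over all $[\{1,\dots,2s\},\{1,\dots,2t\}]$-labeled graphs $\Gamma=(V^\Gamma,E^\Gamma_S,E^\Gamma_T)$. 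By Definition~\ref{def:bigraph} the three pieces $V^\Gamma,E^\Gamma_S,E^\Gamma_T$ are chosen independently and only $V^\Gamma$ enters the weight, and there are $(2s-1)!!$ perfect matchings of $\{1,\dots,2s\}$ and $(2t-1)!!$ of $\{1,\dots,2t\}$; hence the inner sum equals $(2s-1)!!\,(2t-1)!!\sum_{V}\prod_{v\in V}\lambda_{\deg(v)}$, where $V$ runs over set partitions of $\{1,\dots,2s\}\sqcup\{1,\dots,2t\}$ into nonempty blocks and $\deg(v)=(u,w)$ records the numbers of {\colS}- and {\colT}-colored labels in $v$.

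\emph{The exponential formula.} It then remains to prove $\sum_{V}\prod_{v\in V}\lambda_{\deg(v)}=(2s)!(2t)!\,a_{2s,2t}(\lambda)$. Encoding a partition into $k$ blocks as an ordered $k$-tuple of (automatically distinct) nonempty blocks divided by $k!$, and grouping by the bidegree sequence $((u_1,w_1),\dots,(u_k,w_k))$ with $u_i+w_i\ge 1$, $\sum_i u_i=2s$, $\sum_i w_i=2t$, the number of ordered tuples realizing a given sequence is $\binom{2s}{u_1,\dots,u_k}\binom{2t}{w_1,\dots,w_k}=\tfrac{(2s)!(2t)!}{\prod_i u_i!\,w_i!}$; therefore
\[
  \sum_{V}\ \prod_{v\in V}\lambda_{\deg(v)}
  =(2s)!(2t)!\sum_{k\ge 0}\frac1{k!}\sum_{\substack{(u_i,w_i)_{i=1}^k:\,u_i+w_i\ge 1\\ \sum_i u_i=2s,\ \sum_i w_i=2t}}\ \prod_{i=1}^k\frac{\lambda_{u_i,w_i}}{u_i!\,w_i!}
  =(2s)!(2t)!\,[x^{2s}y^{2t}]\exp\!\Big(\sum_{u+w\ge 1}\lambda_{u,w}\tfrac{x^uy^w}{u!w!}\Big),
\]
which is $(2s)!(2t)!\,a_{2s,2t}(\lambda)$ by~\eqref{eq:coeffs_extraction_gen_funct}. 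Substituting back, the factors $(2s)!(2t)!$ cancel and the proposition follows.

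This is the classical exponential-formula (species) computation adapted to two colors, so I do not expect a genuine obstacle. The only point requiring care is the last step: the bookkeeping between unordered and ordered set partitions, and, in the bichromatic setting, checking that blocks meeting only one of the two color classes (i.e.\ $u_i=0$ or $w_i=0$, but never both) are correctly accounted for by the multinomials with the convention $0!=1$ — which is exactly mirrored by the constraint $u+w\ge 1$ in~\eqref{eq:coeffs_extraction_gen_funct}.
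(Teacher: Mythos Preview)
Your proof is correct and follows essentially the same route as the paper: pass to labeled graphs via Lemma~\ref{lmm:aut}, factor off the $(2s-1)!!\,(2t-1)!!$ matchings, and identify the remaining vertex-partition sum with $(2s)!(2t)!\,a_{2s,2t}(\lambda)$. The only cosmetic difference is that the paper isolates the partition count into a separate orbit--stabilizer lemma (Lemma~\ref{lmm:biparititons}), grouping partitions by the multiset $\{n_{u,w}\}$ of block bidegrees, whereas you do the equivalent bookkeeping inline via ordered $k$-tuples and multinomials; both unwind the exponential in~\eqref{eq:coeffs_extraction_gen_funct} the same way.
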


We postpone the proof and first illustrate the result with an example.
\begin{example}
    The formula in Proposition \ref{prop:genfun2colgraphs} provides a recipe to count our graphs for a given number of edges, grouping them according to their bidegrees. For instance, the coefficient of $\eta^1$ counts graphs with one edge:
    \begin{align*}
    \sum_{\substack{G\in \G, \\ |E^G|=1}} \frac{1}{\left\lvert \Aut(G) \right\rvert} \prod \limits _{v\in V^G} \lambda_{\deg(v)} &= \;\;\; \scalebox{.3}{\begin{tikzpicture}[x=1ex,y=1ex,baseline={([yshift=-1.5ex]current bounding box.center)}]
    \coordinate (v);
    \coordinate [above=4 of v] (vm1);
    \coordinate [right=3 of vm1] (v11);
    \draw[cb-red, line width = 3.5] (v11) circle(3);
    \filldraw (vm1) circle (3pt);
\end{tikzpicture}
    \coordinate (vm);
    \coordinate [left=4 of vm] (v0);
    \coordinate [right=4 of vm] (v1);
    \draw[cb-red, line width = 3.5] (v0) to (v1);
    \filldraw (v0) circle (3pt);
    \filldraw (v1) circle (3pt);
\end{tikzpicture}
    \coordinate (v);
    \coordinate [above=4 of v] (vm1);
    \coordinate [right=3 of vm1] (v11);
    \draw[cb-yellow, line width = 3.5] (v11) circle(3);
    \filldraw (vm1) circle (3pt);
\end{tikzpicture}
    \coordinate (vm);
    \coordinate [left=4 of vm] (v0);
    \coordinate [right=4 of vm] (v1);
    \draw[cb-yellow, line width = 3.5] (v0) to (v1);
    \filldraw (v0) circle (3pt);
    \filldraw (v1) circle (3pt);
\end{tikzpicture}
    &= \frac{1}{2} \lambda_{2,0} + \frac{1}{2} \lambda_{1,0}^2 + \frac{1}{2} \lambda_{0,2} + \frac{1}{2} \lambda_{0,1}^2.
    \end{align*}
    Using the power series on the right-hand side of Proposition \ref{prop:genfun2colgraphs}, this can be obtained simply as $a_{2,0}+a_{0,2}$, and by expanding the exponential in \eqref{eq:coeffs_extraction_gen_funct}, we get exactly the above expression. If for $|E^G|=1$ these two approaches may seem equally complicated, already for graphs with two edges it is clear that the use of the generating function speeds up the computation. In fact, there are seven (monochromatic) graphs with two edges: \begin{tikzpicture}[x=1ex,y=1ex,baseline={([yshift=-.5ex]current bounding box.center)}]
    \coordinate (v);
    \coordinate[above left=1.3 of v] (v1);
    \coordinate[below left=1.3 of v] (v2);
    \coordinate[above right=1.3 of v] (v3);
    \coordinate[below right=1.3 of v] (v4);

    \draw (v1) to (v3);
    \draw (v2) to (v4);
    \filldraw (v1) circle(1pt);
    \filldraw (v2) circle(1pt);
    \filldraw (v3) circle(1pt);
    \filldraw (v4) circle(1pt);
\end{tikzpicture}
    \coordinate (v);

    \coordinate [above=.9 of v] (vm1);
    \coordinate [right=.7 of vm1] (v11);
    \draw (v11) circle(.7);
    \filldraw (vm1) circle (1pt);

    \coordinate [below=.9 of v] (vm2);
    \coordinate [right=.7 of vm2] (v12);
    \draw (v12) circle(.7);
    \filldraw (vm2) circle (1pt);
\end{tikzpicture}
    \coordinate (v);
    \coordinate[above left=1.3 of v] (v1);
    \coordinate[above right=1.3 of v] (v3);

    \draw (v1) to (v3);
    \filldraw (v1) circle(1pt);
    \filldraw (v3) circle(1pt);

    \coordinate [below left=1.3 of v] (vm2);
    \coordinate [right=.7 of vm2] (v12);
    \draw (v12) circle(.7);
    \filldraw (vm2) circle (1pt); 

\end{tikzpicture}
    \coordinate (vm);
    \coordinate [left=.7 of vm] (v0);
    \coordinate [right=.7 of vm] (v1);
    \draw (v0) circle(.7);
    \draw (v1) circle(.7);
    \filldraw (vm) circle (1pt);
\end{tikzpicture}%
\, , \begin{tikzpicture}[x=1ex,y=1ex,baseline={([yshift=-.5ex]current bounding box.center)}]
    \coordinate (vm);
    \coordinate [left=1 of vm] (v0);
    \coordinate [right=1 of vm] (v1);
    \draw (v0) to[bend left=45] (v1);
    \draw (v0) to[bend right=45] (v1);
    \filldraw (v0) circle (1pt);
    \filldraw (v1) circle (1pt);
\end{tikzpicture}%
\, , \begin{tikzpicture}[x=1ex,y=1ex,baseline={([yshift=-.5ex]current bounding box.center)}]
    \coordinate (v);
    \coordinate[left=1 of v] (v1);
    \coordinate[right=1 of v] (v3);

    \draw (v1) to (v3);
    \filldraw (v1) circle(1pt);
    \filldraw (v3) circle(1pt);

    \coordinate [right=.7 of v3] (v12);
    \draw (v12) circle(.7);

\end{tikzpicture}
    \coordinate (v);
    \coordinate[left=2 of v] (v1);
    \coordinate[right=2 of v] (v3);

    \draw (v1) to (v3);
    \filldraw (v1) circle(1pt);
    \filldraw (v3) circle(1pt);
    \filldraw (v) circle(1pt);

\end{tikzpicture}
    \begin{align*}
        \sum_{\substack{G\in \G, \\ |E^G|=2}} &\frac{1}{\left\lvert \Aut(G) \right\rvert} \prod \limits _{v\in V^G} \lambda_{\deg(v)} = 3 a_{4,0} + a_{2,2} + 3 a_{0,4} =\\
        =& \lambda_{0,1} \lambda_{1,0} \lambda_{1,1}+\tfrac{\lambda_{0,1}^4}{8}+\tfrac{3 \lambda_{0,1}^2 \lambda_{0,2}}{4}+\tfrac{\lambda_{0,1}^2 \lambda_{1,0}^2}{4}+\tfrac{\lambda_{0,1}^2 \lambda_{2,0}}{4}\\&+\tfrac{\lambda_{0,1} \lambda_{0,3}}{2}
        +\tfrac{\lambda_{0,1} \lambda_{2,1}}{2}
        +\tfrac{3 \lambda_{0,2}^2}{8}+\tfrac{\lambda_{0,2} \lambda_{1,0}^2}{4}+\tfrac{\lambda_{0,2} \lambda_{2,0}}{4}+\tfrac{\lambda_{0,4}}{8}+\tfrac{\lambda_{1,0}^4}{8}\\
        &+\tfrac{3 \lambda_{1,0}^2 \lambda_{2,0}}{4}+\tfrac{\lambda_{1,0} \lambda_{1,2}}{2}+\tfrac{\lambda_{1,0} \lambda_{3,0}}{2}+\tfrac{\lambda_{1,1}^2}{2}+\tfrac{3 \lambda_{2,0}^2}{8}+\tfrac{\lambda_{2,2}}{4}+\tfrac{\lambda_{4,0}}{8}.\qedhere
    \end{align*}
\end{example}

To prove Proposition \ref{prop:genfun2colgraphs}, we use the following lemma on the number of partitions of a set where elements come in two different colors.
Let $S = \{1,\ldots,s\}$ and $T = \{1,\ldots,t\}$ and $\PP_{s,t}$ the set of partitions of the disjoint union $S\sqcup T$.
For each block $B$ of a partition $P \in \PP_{s,t}$ we define the \emph{bidegree} $\deg(B)$ of the block to be the pair of integers $(u,w)$ 
where $u$ is the number of elements from $S$ and $w$ the number of elements from $T$ in $B$.
\begin{lemma}
\label{lmm:biparititons}
Given $s,t \geq 0$, consider a set of non-negative integers $n_{u,w}$ indexed by pairs $u,w$ with $0 \leq u \leq s$, $0 \leq w \leq t$, $u+w \geq 1$, such that 
\[
\sum_{u} u \cdot n_{u,w} = s, \quad \sum_{w} w \cdot  n_{u,w} = t.
\]
The number of partitions in $\PP_{s,t}$ with exactly $n_{u,w}$ blocks of bidegree $(u,w)$ is
\[
\frac{s!t!}{\prod_{u,w} n_{u,w}! (u!)^{n_{u,w}} (w!)^{n_{u,w}}}.
\]
\end{lemma}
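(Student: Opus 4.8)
The statement is a standard multinomial/multiset counting fact, so the plan is to build an explicit bijection (or a direct counting argument) and track the overcounting factors. First I would fix the prescribed profile $(n_{u,w})$ and describe how to construct a partition $P\in\PP_{s,t}$ with exactly $n_{u,w}$ blocks of bidegree $(u,w)$: for each pair $(u,w)$, decide which $u$-subset of the remaining elements of $S$ and which $w$-subset of the remaining elements of $T$ go into each of the $n_{u,w}$ blocks of that bidegree. Carrying this out naively, one would first order all blocks (there are $\sum_{u,w} n_{u,w}$ of them) and then distribute the elements of $S$ and the elements of $T$ among them according to the bidegree pattern. The number of ways to split $S=\{1,\dots,s\}$ into an ordered list of blocks whose sizes are the $u$-components (with $n_{u,w}$ blocks of $S$-size $u$) is the multinomial $s!/\prod_{u,w}(u!)^{n_{u,w}}$, and similarly the split of $T$ contributes $t!/\prod_{u,w}(w!)^{n_{u,w}}$; the two colors are distributed independently since $S$ and $T$ are disjoint.

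The second step is to correct for the fact that a partition is an unordered set of blocks. Given a partition with the prescribed profile, the above construction produces it exactly once for each ordering of its blocks that respects the chosen indexing scheme. Because blocks of distinct bidegrees are distinguishable (they have different sizes or different color counts), the only permutations of blocks that yield the same partition are those permuting the $n_{u,w}$ blocks of a common bidegree $(u,w)$ among themselves; there are $\prod_{u,w} n_{u,w}!$ such permutations, and they act freely on the set of ordered constructions. Hence the number of partitions equals
\[
\frac{1}{\prod_{u,w} n_{u,w}!}\cdot\frac{s!}{\prod_{u,w}(u!)^{n_{u,w}}}\cdot\frac{t!}{\prod_{u,w}(w!)^{n_{u,w}}} = \frac{s!\,t!}{\prod_{u,w} n_{u,w}!\,(u!)^{n_{u,w}}(w!)^{n_{u,w}}},
\]
which is the claimed formula. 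The hypotheses $\sum_u u\cdot n_{u,w}=s$ and $\sum_w w\cdot n_{u,w}=t$ are exactly what is needed for the multinomial coefficients above to be well-defined (the block sizes sum to $s$ and $t$ respectively), so no partition is missed and none is counted that violates the profile.

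I expect the only genuinely delicate point to be the bookkeeping in the second step: making precise that the overcounting factor is exactly $\prod_{u,w} n_{u,w}!$ and not more. The key observation is that a block is determined as a subset of $S\sqcup T$, and two blocks of the same bidegree are interchangeable in the construction but need not be equal as sets, so the symmetric group $\prod_{u,w}\SG_{n_{u,w}}$ acts freely on the fibers of the "construction $\mapsto$ partition" map; one should state this cleanly rather than wave at it. An alternative, perhaps cleaner, route is induction on $s+t$ (or on the number of blocks), peeling off one block at a time and using the binomial coefficients $\binom{s}{u}\binom{t}{w}$, but the symmetry factor then reappears as the usual division that converts an ordered selection of blocks into an unordered one; either way the same factor $\prod n_{u,w}!$ must be justified.
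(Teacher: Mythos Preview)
Your argument is correct. The paper proves the lemma via the orbit--stabilizer theorem: the group $\SG_s\times\SG_t$ acts on $\PP_{s,t}$ by permuting the labels in $S$ and $T$, this action is transitive on the set of partitions with a fixed bidegree profile $\{n_{u,w}\}$, and the stabilizer of any such partition is isomorphic to $\prod_{u,w}(\SG_u\times\SG_w)^{n_{u,w}}\rtimes\SG_{n_{u,w}}$; the formula then drops out of $|\text{orbit}|=|\SG_s\times\SG_t|/|\text{stabilizer}|$. Your approach is the direct multinomial count (distribute $S$ and $T$ into an ordered list of slots, then quotient by permutations of slots of equal bidegree). The two are really the same computation in different clothing: your overcounting factor $\prod_{u,w}(u!)^{n_{u,w}}(w!)^{n_{u,w}}\cdot n_{u,w}!$ is exactly the order of the stabilizer the paper writes down, and your observation that $\prod_{u,w}\SG_{n_{u,w}}$ acts freely on ordered constructions is the semidirect-product piece of that stabilizer. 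The paper's version is a bit terser; yours is more self-contained and does not invoke any group-theoretic language. One small tightening: where you write that two blocks of the same bidegree ``need not be equal as sets,'' you can say more---they \emph{cannot} be equal, since blocks of a partition are nonempty and disjoint, which is precisely why the $\prod_{u,w}\SG_{n_{u,w}}$-action is free.
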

\begin{proof}
The group $\SG_{s} \times \SG_{t}$ acts on $\PP_{s,t}$ by permuting the elements of $S$ and $T$, respectively. This action is transitive if we restrict to partitions with specific block bidegree set $\{n_{u,w}\}_{u,w}$. A specific partition with given block bidegrees is stabilized by the subgroup that permutes the elements inside each block and blocks of the same size. This subgroup is isomorphic to $(\SG_{u} \times \SG_{w})^{n_{u,w}} \rtimes \SG_{n_{u,w}}$. The claim follows from the orbit stabilizer theorem.
\end{proof}
\begin{proof}[Proof of Proposition~\ref{prop:genfun2colgraphs}]
From \eqref{eq:coeffs_extraction_gen_funct},
and $e^X = \sum_{n \geq 0} \frac{X^n}{n!}$, we get 
\begin{align}
\label{eq:expanded-exp}
s! \cdot t! \cdot a_{s,t}(\lambda) = 
\sum_{\{n_{u,w}\}}
\frac{s!t!}{\prod_{u,w} n_{u,w}! (u!)^{n_{u,w}} (w!)^{n_{u,w}}}
\prod_{\substack{u,w\geq 0\\ u+w \geq 1}} \lambda_{u,w}^{n_{u,w}},
\end{align}
where the sum is over all sets of integers $\{n_{u,w}\}$ that fulfill the conditions for Lemma~\ref{lmm:biparititons} with respect to $s$ and $t$.\par 

We can match the elements of the set $S=\{1,\ldots,2s\}$ among each other in $(2s-1)!!$ ways and the ones of $T=\{1,\ldots,2t\}$ analogously. 
So, by Definition~\ref{def:bigraph}, Lemma~\ref{lmm:biparititons}, and \eqref{eq:expanded-exp}, the number of $[S,T]$-labeled graphs with exactly $n_{u,w}$ vertices of bidegree $u,w$ is $(2s-1)!! \cdot (2t-1)!!$ times the coefficient of the monomial 
$\prod_{u,w} \lambda_{u,w}^{n_{u,w}}$ in 
the polynomial  $(2s)! \cdot (2t)! \cdot a_{2s,2t}(\lambda) \in \mathcal R$.
The statement follows then from Lemma~\ref{lmm:aut}.
\end{proof}

Our first main result follows by combining Propositions~\ref{prop:laplace} and \ref{prop:genfun2colgraphs}.

\begin{theorem} 
\label{thm:laplace-graphs}
If $I(z)$, $g$, $D$ and the coefficients $\ol_{u,w}$ are related as in Section~\ref{sec:laplace}, then the 
integral in \eqref{equ:exponentialIntegral} has the asymptotic expansion
\[
I(z) \sim \sum_{n\geq 0} A_n z^{-n},
\]
for large $z$, with the coefficients $A_n$ given by
\[
A_n = \sum_{G \in \GGA_{-n}} 
        \frac{1 }{
\left\lvert \Aut(G) \right\rvert} 
\prod \limits _{v\in V^G}
\ol_{\deg(v)},
\]
where we sum over the set $\GGA_{-n}$ of all edge-bicolored graphs with vertex degrees at least $3$ and Euler characteristic equal to $-n$.
\end{theorem}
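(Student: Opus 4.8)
The plan is to combine the two propositions just established. Proposition~\ref{prop:laplace} already gives $I(z)\sim\sum_{n\ge 0}A_nz^{-n}$ with $A_n$ defined through a generating series in $z^{-1}$, so all that remains is to recognize that series as the one produced by Proposition~\ref{prop:genfun2colgraphs}. The only real work is the bookkeeping of powers of $z$.

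First I would extract the coefficient of $\eta^{m}$ in the identity of Proposition~\ref{prop:genfun2colgraphs}, obtaining for every $m\ge 0$ an identity in $\mathcal R$,
\[
\sum_{\substack{G\in\GG\\|E^G|=m}}\frac{1}{|\Aut(G)|}\prod_{v\in V^G}\lambda_{\deg(v)}
=\sum_{s+t=m}(2s-1)!!\,(2t-1)!!\,a_{2s,2t}(\lambda).
\]
I then apply the ring homomorphism $\mathcal R\to\Q[\,\ol_{u,w}:u+w\ge 3\,][z]$ sending $\lambda_{u,w}\mapsto 0$ when $1\le u+w<3$ and $\lambda_{u,w}\mapsto z\,\ol_{u,w}$ when $u+w\ge 3$ — this is precisely the substitution \eqref{eq:substitution_lambda_Lambda}, and it turns the right-hand side into $a_{2s,2t}(z\cdot\ol)$ as in Proposition~\ref{prop:laplace}. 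On the left-hand side the assignment $\lambda_{u,w}\mapsto 0$ for $u+w<3$ kills every term carrying a variable from a vertex of degree $<3$, so only graphs $G\in\GGA$ survive; and since such a $G$ has $|\deg(v)|\ge 3$ at every vertex, $\prod_{v\in V^G}\lambda_{\deg(v)}\mapsto z^{|V^G|}\prod_{v\in V^G}\ol_{\deg(v)}$. Multiplying the resulting identity by $z^{-m}$ and using $|E^G|=m$, so that $|V^G|-m=\chi(G)$ while $m=s+t$ on the other side, gives for each $m\ge 0$
\[
\sum_{\substack{G\in\GGA\\|E^G|=m}}\frac{z^{\chi(G)}}{|\Aut(G)|}\prod_{v\in V^G}\ol_{\deg(v)}
=\sum_{s+t=m}z^{-(s+t)}(2s-1)!!\,(2t-1)!!\,a_{2s,2t}(z\cdot\ol).
\]

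Next I would sum over all $m\ge 0$. The right-hand side is then exactly the series in $\R[[z^{-1}]]$ appearing in Proposition~\ref{prop:laplace}; it is a genuine formal power series in $z^{-1}$ because $a_{2s,2t}(z\cdot\ol)$ has $z$-degree at most $\tfrac23(s+t)$, so each summand is $z^{-m}$ times a polynomial in $z$ of degree at most $\tfrac23 m$, and only the finitely many $m\le 3n$ can contribute to $z^{-n}$. On the graph side the same count is the inequality $3|V^G|\le\sum_{v\in V^G}|\deg(v)|=2|E^G|$, i.e.\ $\chi(G)=|V^G|-|E^G|\le-\tfrac13|E^G|$; hence $\chi(G)=-n$ forces $|E^G|\le 3n$ and $|V^G|\le 2n$, so only finitely many isomorphism classes of graphs contribute to $z^{-n}$ and the left-hand sum is a well-defined element of $\Q[\,\ol_{u,w}:u+w\ge 3\,][[z^{-1}]]$. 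Equating coefficients of $z^{-n}$ yields
\[
\sum_{G\in\GGA_{-n}}\frac{1}{|\Aut(G)|}\prod_{v\in V^G}\ol_{\deg(v)}=A_n,
\]
which, together with $I(z)\sim\sum_{n\ge 0}A_nz^{-n}$ from Proposition~\ref{prop:laplace}, is the assertion of the theorem.

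The step requiring the most care is precisely this bookkeeping: substitution \eqref{eq:substitution_lambda_Lambda} attaches one factor of $z$ to each vertex, whereas $\eta^{|E^G|}$ in Proposition~\ref{prop:genfun2colgraphs}, after the multiplication by $z^{-m}$, attaches a factor $z^{-1}$ to each edge, so that the net power of $z$ per graph is $|V^G|-|E^G|=\chi(G)$; and one must notice that the vertex-degree-$\ge 3$ hypothesis is exactly what makes ``$\chi(G)=-n$'' a condition met by only finitely many graphs, legitimizing the formal-series manipulations. No new analytic estimate is needed beyond those already used to prove Proposition~\ref{prop:laplace}.
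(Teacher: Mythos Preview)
Your proof is correct and follows essentially the same route as the paper's own argument: both combine Propositions~\ref{prop:laplace} and~\ref{prop:genfun2colgraphs} via the substitution~\eqref{eq:substitution_lambda_Lambda}, tracking that each vertex contributes a factor $z$ and each edge a factor $z^{-1}$ so that the net power per graph is $z^{\chi(G)}$. Your write-up is in fact more careful than the paper's, which glosses over the finiteness check (that only finitely many $G\in\GGA$ have $\chi(G)=-n$) that you make explicit.
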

\begin{proof}
Using the fact that $\chi(G)=|V^G| - |E^G|$ we rewrite
\begin{equation*}
    \sum_{n\geq 0}A_nz^{-n} = \sum_{G\in \GGA_{-n}} \frac{z^{\chi(G)}}{|\Aut(G)|} \prod_{v\in V^G} \Lambda_{\deg(v)} = \sum_{G \in \GGA_{-n}} \frac{z^{-|E^G|}}{|\Aut(G)|} \prod_{v\in V^G} z\cdot \Lambda_{\deg(v)}.
\end{equation*}
Applying Proposition \ref{prop:genfun2colgraphs} for $\lambda_{u,w}$ as defined in \eqref{eq:substitution_lambda_Lambda}, this is further equal to
\[
    \sum_{s,t\geq 0} z^{-(s+t)}\cdot (2s-1)!!\cdot (2t-1)!!\cdot a_{2s,2t}(\lambda).
\]
By Proposition \ref{prop:laplace}, this is the large-$z$ asymptotic expansion of $I(z)$.
\end{proof}

\begin{example}\label{ex:Ising_model_deg4_part2}
Continuing Example~\ref{ex:Ising_model_deg4_part1},
let $c^{(k)}_n$ be the coefficient of $\lambda^k$ in $A_n$. By Theorem~\ref{thm:laplace-graphs}, $c^{(k)}_n$ counts automorphism-weighted graphs with Euler characteristic $-n$ and vertex degree four, such that $k_1$ vertices have exactly two yellow half-edges and $k_2$ vertices have four yellow half-edges, so that $k_1+k_2 = k$. We can view this explicitly for $n=2$, as follows. Among the $21$ (monochromatic) graphs with $\chi = -2$, there are only three $4$-regular graphs. These are \begin{tikzpicture}[x=1ex,y=1ex,baseline={([yshift=-.5ex]current bounding box.center)}]
    \coordinate (v);

    \coordinate [above=1.2 of v] (vm1);
    \coordinate [left=.7 of vm1] (v01);
    \coordinate [right=.7 of vm1] (v11);
    \draw (v01) circle(.7);
    \draw (v11) circle(.7);
    \filldraw (vm1) circle (1pt);

    \coordinate [below=1.2 of v] (vm2);
    \coordinate [left=.7 of vm2] (v02);
    \coordinate [right=.7 of vm2] (v12);
    \draw (v02) circle(.7);
    \draw (v12) circle(.7);
    \filldraw (vm2) circle (1pt);
\end{tikzpicture}
    \coordinate (vm);
    \coordinate [left=1 of vm] (v0);
    \coordinate [right=1 of vm] (v1);
    \draw (v0) to[bend left=45] (v1);
    \draw (v0) to[bend right=45] (v1);
    \draw (vm) circle(1);
    \filldraw (v0) circle (1pt);
    \filldraw (v1) circle (1pt);
\end{tikzpicture}%
, \begin{tikzpicture}[x=1ex,y=1ex,baseline={([yshift=-.5ex]current bounding box.center)}]
    \coordinate (vm);
    \coordinate [left=.7 of vm] (v0);
    \coordinate [right=.7 of vm] (v1);
    \coordinate [left=.7 of v0] (vc1);
    \coordinate [right=.7 of v1] (vc2);
    \draw (vc1) circle(.7);
    \draw (vc2) circle(.7);
    \draw (vm) circle(.7);
    \filldraw (v0) circle (1pt);
    \filldraw (v1) circle (1pt);
\end{tikzpicture}
    \begin{align*}
        c_2^{(0)} &= \scalebox{.3}{\begin{tikzpicture}[x=1ex,y=1ex,baseline={([yshift=-1.5ex]current bounding box.center)}]
    \coordinate (v);

    \coordinate [above=4 of v] (vm1);
    \coordinate [left=3 of vm1] (v01);
    \coordinate [right=3 of vm1] (v11);
    \draw[cb-red, line width = 3.5] (v01) circle(3);
    \draw[cb-red, line width = 3.5] (v11) circle(3);
    \filldraw (vm1) circle (3pt);

    \coordinate [below=4 of v] (vm2);
    \coordinate [left=3 of vm2] (v02);
    \coordinate [right=3 of vm2] (v12);
    \draw[cb-red, line width = 3.5] (v02) circle(3);
    \draw[cb-red, line width = 3.5] (v12) circle(3);
    \filldraw (vm2) circle (3pt);
\end{tikzpicture}
    \coordinate (vm);
    \coordinate [left=4 of vm] (v0);
    \coordinate [right=4 of vm] (v1);
    \draw[cb-red, line width = 3.5] (v0) to[bend left=45] (v1);
    \draw[cb-red, line width = 3.5] (v0) to[bend right=45] (v1);
    \draw[cb-red, line width = 3.5] (vm) circle(4);
    \filldraw (v0) circle (3pt);
    \filldraw (v1) circle (3pt);
\end{tikzpicture}
    \coordinate (vm);
    \coordinate [left=4 of vm] (v0);
    \coordinate [right=4 of vm] (v1);
    \coordinate [left=4 of v0] (vc1);
    \coordinate [right=4 of v1] (vc2);
    \draw[cb-red, line width = 3.5] (vc1) circle(4);
    \draw[cb-red, line width = 3.5] (vm) circle(4);
    \draw[cb-red, line width = 3.5] (vc2) circle(4);
    \filldraw (v0) circle (3pt);
    \filldraw (v1) circle (3pt);
\end{tikzpicture}
        c_2^{(1)} &= \scalebox{.3}{\begin{tikzpicture}[x=1ex,y=1ex,baseline={([yshift=-1.5ex]current bounding box.center)}]
    \coordinate (v);

    \coordinate [above=4 of v] (vm1);
    \coordinate [left=3 of vm1] (v01);
    \coordinate [right=3 of vm1] (v11);
    \draw[cb-red, line width = 3.5] (v01) circle(3);
    \draw[cb-red, line width = 3.5] (v11) circle(3);
    \filldraw (vm1) circle (3pt);

    \coordinate [below=4 of v] (vm2);
    \coordinate [left=3 of vm2] (v02);
    \coordinate [right=3 of vm2] (v12);
    \draw[cb-red, line width = 3.5] (v02) circle(3);
    \draw[cb-yellow, line width = 3.5] (v12) circle(3);
    \filldraw (vm2) circle (3pt);
\end{tikzpicture}
    \coordinate (vm);
    \coordinate [left=4 of vm] (v0);
    \coordinate [right=4 of vm] (v1);
    \coordinate [left=4 of v0] (vc1);
    \coordinate [right=4 of v1] (vc2);
    \draw[cb-red, line width = 3.5] (vc1) circle(4);
    \draw[cb-red, line width = 3.5] (vm) circle(4);
    \draw[cb-yellow, line width = 3.5] (vc2) circle(4);
    \filldraw (v0) circle (3pt);
    \filldraw (v1) circle (3pt);
\end{tikzpicture}
        c_2^{(2)} &= \scalebox{.3}{\begin{tikzpicture}[x=1ex,y=1ex,baseline={([yshift=-1.5ex]current bounding box.center)}]
    \coordinate (v);

    \coordinate [above=4 of v] (vm1);
    \coordinate [left=3 of vm1] (v01);
    \coordinate [right=3 of vm1] (v11);
    \draw[cb-red, line width = 3.5] (v01) circle(3);
    \draw[cb-red, line width = 3.5] (v11) circle(3);
    \filldraw (vm1) circle (3pt);

    \coordinate [below=4 of v] (vm2);
    \coordinate [left=3 of vm2] (v02);
    \coordinate [right=3 of vm2] (v12);
    \draw[cb-yellow, line width = 3.5] (v02) circle(3);
    \draw[cb-yellow, line width = 3.5] (v12) circle(3);
    \filldraw (vm2) circle (3pt);
\end{tikzpicture}
    \coordinate (v);

    \coordinate [above=4 of v] (vm1);
    \coordinate [left=3 of vm1] (v01);
    \coordinate [right=3 of vm1] (v11);
    \draw[cb-red, line width = 3.5] (v01) circle(3);
    \draw[cb-yellow, line width = 3.5] (v11) circle(3);
    \filldraw (vm1) circle (3pt);

    \coordinate [below=4 of v] (vm2);
    \coordinate [left=3 of vm2] (v02);
    \coordinate [right=3 of vm2] (v12);
    \draw[cb-red, line width = 3.5] (v02) circle(3);
    \draw[cb-yellow, line width = 3.5] (v12) circle(3);
    \filldraw (vm2) circle (3pt);
\end{tikzpicture}
    \coordinate (vm);
    \coordinate [left=4 of vm] (v0);
    \coordinate [right=4 of vm] (v1);
    \draw[cb-yellow, line width = 3.5] (v0) to[bend left=45] (v1);
    \draw[cb-yellow, line width = 3.5] (v0) to[bend right=45] (v1);
    \draw[cb-red, line width = 3.5] (vm) circle(4);
    \filldraw (v0) circle (3pt);
    \filldraw (v1) circle (3pt);
\end{tikzpicture}
    \coordinate (vm);
    \coordinate [left=4 of vm] (v0);
    \coordinate [right=4 of vm] (v1);
    \coordinate [left=4 of v0] (vc1);
    \coordinate [right=4 of v1] (vc2);
    \draw[cb-red, line width = 3.5] (vc1) circle(4);
    \draw[cb-yellow, line width = 3.5] (vm) circle(4);
    \draw[cb-red, line width = 3.5] (vc2) circle(4);
    \filldraw (v0) circle (3pt);
    \filldraw (v1) circle (3pt);
\end{tikzpicture}
    \coordinate (vm);
    \coordinate [left=4 of vm] (v0);
    \coordinate [right=4 of vm] (v1);
    \coordinate [left=4 of v0] (vc1);
    \coordinate [right=4 of v1] (vc2);
    \draw[cb-yellow, line width = 3.5] (vc1) circle(4);
    \draw[cb-red, line width = 3.5] (vm) circle(4);
    \draw[cb-yellow, line width = 3.5] (vc2) circle(4);
    \filldraw (v0) circle (3pt);
    \filldraw (v1) circle (3pt);
\end{tikzpicture}
        c_2^{(3)} &= \scalebox{.3}{\begin{tikzpicture}[x=1ex,y=1ex,baseline={([yshift=-1.5ex]current bounding box.center)}]
    \coordinate (v);

    \coordinate [above=4 of v] (vm1);
    \coordinate [left=3 of vm1] (v01);
    \coordinate [right=3 of vm1] (v11);
    \draw[cb-red, line width = 3.5] (v01) circle(3);
    \draw[cb-yellow, line width = 3.5] (v11) circle(3);
    \filldraw (vm1) circle (3pt);

    \coordinate [below=4 of v] (vm2);
    \coordinate [left=3 of vm2] (v02);
    \coordinate [right=3 of vm2] (v12);
    \draw[cb-yellow, line width = 3.5] (v02) circle(3);
    \draw[cb-yellow, line width = 3.5] (v12) circle(3);
    \filldraw (vm2) circle (3pt);
\end{tikzpicture}
    \coordinate (vm);
    \coordinate [left=4 of vm] (v0);
    \coordinate [right=4 of vm] (v1);
    \coordinate [left=4 of v0] (vc1);
    \coordinate [right=4 of v1] (vc2);
    \draw[cb-red, line width = 3.5] (vc1) circle(4);
    \draw[cb-yellow, line width = 3.5] (vm) circle(4);
    \draw[cb-yellow, line width = 3.5] (vc2) circle(4);
    \filldraw (v0) circle (3pt);
    \filldraw (v1) circle (3pt);
\end{tikzpicture}
        c_2^{(4)} &= \scalebox{.3}{\begin{tikzpicture}[x=1ex,y=1ex,baseline={([yshift=-1.5ex]current bounding box.center)}]
    \coordinate (v);

    \coordinate [above=4 of v] (vm1);
    \coordinate [left=3 of vm1] (v01);
    \coordinate [right=3 of vm1] (v11);
    \draw[cb-yellow, line width = 3.5] (v01) circle(3);
    \draw[cb-yellow, line width = 3.5] (v11) circle(3);
    \filldraw (vm1) circle (3pt);

    \coordinate [below=4 of v] (vm2);
    \coordinate [left=3 of vm2] (v02);
    \coordinate [right=3 of vm2] (v12);
    \draw[cb-yellow, line width = 3.5] (v02) circle(3);
    \draw[cb-yellow, line width = 3.5] (v12) circle(3);
    \filldraw (vm2) circle (3pt);
\end{tikzpicture}
    \coordinate (vm);
    \coordinate [left=4 of vm] (v0);
    \coordinate [right=4 of vm] (v1);
    \draw[cb-yellow, line width = 3.5] (v0) to[bend left=45] (v1);
    \draw[cb-yellow, line width = 3.5] (v0) to[bend right=45] (v1);
    \draw[cb-yellow, line width = 3.5] (vm) circle(4);
    \filldraw (v0) circle (3pt);
    \filldraw (v1) circle (3pt);
\end{tikzpicture}
    \coordinate (vm);
    \coordinate [left=4 of vm] (v0);
    \coordinate [right=4 of vm] (v1);
    \coordinate [left=4 of v0] (vc1);
    \coordinate [right=4 of v1] (vc2);
    \draw[cb-yellow, line width = 3.5] (vc1) circle(4);
    \draw[cb-yellow, line width = 3.5] (vm) circle(4);
    \draw[cb-yellow, line width = 3.5] (vc2) circle(4);
    \filldraw (v0) circle (3pt);
    \filldraw (v1) circle (3pt);
\end{tikzpicture}
    \end{align*}
\end{example}

\begin{remark}[Ising model]
\label{rmk:ising}
Our examples are motivated from the physical Ising model as we will briefly explain in this remark.
The partition function of the \emph{critical Ising model} on a specific monochromatic graph $G$ is defined by
$$
Z(G,\lambda) = \sum_{\substack{\gamma \subset G\\\gamma \text{ Eulerian}}} \lambda^{|E(\gamma)|},
$$
where we sum over all Eulerian subgraphs $\gamma$ of $G$ (see, e.g.,~\cite{MR2989454}). This means that if we delete all edges of $G$ that are not in $\gamma$, then the resulting graph shall only have vertices of even degree.
A pair $(G,\gamma)$ of a monochromatic graph $G$ and an Eulerian subgraph $\gamma \subset G$ is equivalent to an edge-bicolored graph in which an even number of \colT{} edges belongs to each vertex.

Notice that we effectively designed the polynomial $g(x,y)$ from Example~\ref{ex:Ising_model_deg4_part1} and equivalently the coefficients $\Lambda_{u,w}$, such that  the coefficient of $\lambda^k$ in $A_n$ is the
automorphism-weighted number of 4-regular graphs with $k$ \colT{} edges where an even number of \colT{} edges belong to each vertex.

Hence, with $A_n$ as defined in Example~\ref{ex:Ising_model_deg4_part1}, we find that
$$
A_n = \sum_{\substack{G}} \frac{Z(G,\lambda)}{|\Aut G|},
$$
where we sum over all monochromatic graphs $G$ that are $4$-regular 
and which have Euler characteristic $-n$. We can thus 
interpret $A_n$ as the partition function of the critical Ising 
model of a \emph{random} 4-regular monochromatic graph of fixed Euler characteristic. 
Here, random means that each monochromatic graph $G$ is sampled with probabilty $1/|\Aut G|$.
\end{remark}

\section[Efficient computation of the coefficients An]{Efficient computation of the coefficients $A_n$}
\label{sec:algo}

In this section, we describe an effective algorithm to compute the coefficients $A_n$ 
that encode the asymptotic expansion of the integral~\eqref{equ:exponentialIntegral}, 
and the weighted numbers of edge-bicolored graphs of Euler characteristic $-n$, 
by Theorem~\ref{thm:laplace-graphs}.
The algorithm is implemented in \texttt{Julia} and is available at \cite{mathrepo}.

\begin{proposition}\label{prop:algo}
For a given integer $n\geq 1$, and
the coefficients $\Lambda_{u,w}$ as required by Theorem~\ref{thm:laplace-graphs},
the following algorithm correctly computes $A_0,\ldots,A_n$:
\setdefaultleftmargin{0pt}{}{}{}{}{}
\begin{enumerate}
\item[\underline{Step 1}:]
Define the polynomials 
$$F_k(x,y) = 
\sum_{\substack{u,w\geq0\\u+w=k+2}} \Lambda_{u,w}\frac{x^u y^w}{u!w!} \text{ for } k \in \{1,\ldots, 2n\};
$$
\item[\underline{Step 2}:]
Set $Q_0(x,y)=1$ and recursively compute $Q_1,\ldots, Q_{2n}$ using
$$
Q_{m}(x,y) = \frac{1}{m} \sum_{k=1}^{m} k F_k(x,y) Q_{m-k}(x,y) \text{ for } m \in\{1,\ldots,2n\};
$$
\item[\underline{Step 3}:]
Let $q^{(k)}_{s,t}$ be the coefficients of $Q_{k}(x,y) = \sum_{s,t\geq 0} q^{(k)}_{s,t} x^sy^t$.
Then, 
$$
A_k = \sum_{s,t \geq 0} (2s-1)!! \cdot (2t-1)!! \cdot q^{(2k)}_{2s,2t} \text{ for } k \in \{0,\ldots,n\}.
$$
\end{enumerate}
\end{proposition}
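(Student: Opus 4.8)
The plan is to deduce the algorithm's correctness directly from Proposition~\ref{prop:laplace}, which already identifies $A_k$ with the coefficient of $z^{-k}$ in $\sum_{s,t\ge0}z^{-(s+t)}(2s-1)!!\,(2t-1)!!\,a_{2s,2t}(z\cdot\ol)$, with the $a_{s,t}$ as in \eqref{eq:coeffs_extraction_gen_funct} under the substitution \eqref{eq:substitution_lambda_Lambda}. Writing $a_{2s,2t}(z\cdot\ol)=\sum_{j\ge0}\big([z^{j}]a_{2s,2t}(z\cdot\ol)\big)z^{j}$ — a polynomial of degree at most $(s+t)/3$ in $z$, as noted in the proof of Proposition~\ref{prop:laplace} — and extracting the coefficient of $z^{-k}$ term by term gives
\[
A_k=\sum_{s,t\ge0}(2s-1)!!\,(2t-1)!!\,[z^{s+t-k}]\,a_{2s,2t}(z\cdot\ol),
\]
a finite sum. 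Comparing with Step~3, the entire task reduces to proving $q^{(2k)}_{2s,2t}=[z^{s+t-k}]a_{2s,2t}(z\cdot\ol)$.

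First I would record that Step~2 is nothing but the classical recursion for exponentiating a power series. Introduce a formal variable $\tau$ and set $\Psi(x,y,\tau):=\exp\!\big(\sum_{k\ge1}\tau^{k}F_k(x,y)\big)$. Differentiating in $\tau$ and matching coefficients of $\tau^{m-1}$ shows that the coefficients $[\tau^m]\Psi$ satisfy $m\,[\tau^m]\Psi=\sum_{k=1}^{m}kF_k\,[\tau^{m-k}]\Psi$ with $[\tau^{0}]\Psi=1$. Since the $\tau^m$-coefficient of $\Psi$ only involves $F_1,\ldots,F_m$, the truncation of the range of $k$ to $\{1,\ldots,2n\}$ in Step~1 is immaterial, and induction on $m$ gives $Q_m=[\tau^m]\Psi$ for all $m\le 2n$; hence $q^{(m)}_{s,t}=[x^sy^t\tau^m]\Psi$.

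The main step is then a change of variables relating the $\tau$-grading of $\Psi$ to the $z$-grading of the $a_{s,t}$. Since $F_k$ is homogeneous of degree $k+2$ in $(x,y)$ we have $\sum_{k\ge1}\tau^{k}F_k(x,y)=\tau^{-2}\sum_{k\ge1}F_k(\tau x,\tau y)$. On the other hand \eqref{eq:coeffs_extraction_gen_funct} and \eqref{eq:substitution_lambda_Lambda} give $\sum_{s,t}a_{s,t}(z\cdot\ol)x^sy^t=\exp\!\big(z\sum_{k\ge1}F_k(x,y)\big)$, because $\sum_{k\ge1}F_k=\sum_{u+w\ge3}\ol_{u,w}x^uy^w/(u!w!)$. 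Substituting $x\mapsto\tau x$, $y\mapsto\tau y$, and $z\mapsto\tau^{-2}$ into this last identity and using the homogeneity relation yields
\[
\Psi(x,y,\tau)=\sum_{s,t\ge0}\tau^{s+t}\,a_{s,t}(\tau^{-2}\cdot\ol)\,x^sy^t,
\]
and the right-hand side is a genuine power series in $\tau$ precisely because $\deg_z a_{s,t}(z\cdot\ol)\le (s+t)/3\le (s+t)/2$. Comparing coefficients of $x^sy^t$ and expanding $a_{s,t}$ in powers of $z$ gives $q^{(m)}_{s,t}=[z^{(s+t-m)/2}]a_{s,t}(z\cdot\ol)$, where the right-hand side is understood to be $0$ unless $(s+t-m)/2$ is a nonnegative integer. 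Setting $m=2k$ and replacing $(s,t)$ by $(2s,2t)$ yields $q^{(2k)}_{2s,2t}=[z^{s+t-k}]a_{2s,2t}(z\cdot\ol)$, which is exactly what was needed; the sum in Step~3 is finite because $Q_{2k}$ is a polynomial.

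I do not anticipate a real obstacle. The only point requiring care is the homogeneity bookkeeping in the last paragraph — checking that $z\mapsto\tau^{-2}$ produces no negative powers of $\tau$ — which is guaranteed by the degree bound $\deg_z a_{s,t}(z\cdot\ol)\le (s+t)/3$ already recorded in the proof of Proposition~\ref{prop:laplace}; the exponential recursion of Step~2 and the final comparison with Proposition~\ref{prop:laplace} are routine.
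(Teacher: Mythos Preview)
Your proof is correct and follows essentially the same route as the paper: the paper rescales $(x,y)\mapsto(x/\sqrt{z},y/\sqrt{z})$ to obtain the identity $\sum_{s,t}z^{-(s+t)/2}a_{s,t}(z\cdot\ol)x^sy^t=\exp\bigl(\sum_{k\ge1}z^{-k/2}F_k(x,y)\bigr)$ in $\R[x,y][[z^{-1/2}]]$, and then applies $z\,\partial/\partial z$ to extract the recursion of Step~2. Your formal variable $\tau$ is exactly the paper's $z^{-1/2}$, and your homogeneity substitution $(x,y,z)\mapsto(\tau x,\tau y,\tau^{-2})$ is the inverse of the paper's rescaling; the only cosmetic difference is that you establish the recursion first and the link to the $a_{s,t}$ second, whereas the paper does these in the opposite order.
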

To run the algorithm with a fixed $n$, it is sufficient to know $\Lambda_{u,w}$ for all $u,w \geq 0$ with $u+w \leq 2n+2$. Also, recall that we require $\Lambda_{u,w}=0$ if $u+w <3$.

\begin{proof}
    By Proposition~\ref{prop:laplace}, we have this identity of power series in $z^{-n}$:
    \[
        \sum_{n\geq 0} A_n z^{-n} = \sum_{s,t\geq 0} z^{-(s+t)}\cdot(2s-1)!!\cdot (2t-1)!!\cdot a_{2s,2t}(z \cdot \Lambda),
    \]
where $a_{s,t}(z \cdot \Lambda)$ is as described in Proposition~\ref{prop:laplace} and \eqref{eq:coeffs_extraction_gen_funct}:
$$
\sum_{s,t \geq 0} 
 a_{s,t}(z \cdot \Lambda) x^s y^t = 
\exp
\left(
z
\sum_{\substack{u,w \geq 0\\ u+w \geq 3}} \Lambda_{u,w} \frac{x^uy^w}{u!w!}
\right).
$$
Rescaling $(x,y) \mapsto (x/\sqrt{z},y/\sqrt{z})$ in the above formula gives the following identity of 
power series in $\R[x,y][[z^{-1/2}]]$,
\begin{align}
\label{eq:aF}
\sum_{s,t \geq 0} 
 z^{-\frac{s+t}{2}} a_{s,t}(z \cdot \Lambda) x^s y^t = 
\exp
\left(
\sum_{k \geq 1} z^{-\frac{k}{2}}
F_k(x,y)
\right),
\end{align}
where we used the definition of $F_k(x,y)$ in the statement.
Let $q_{s,t}^{(k)}$ be the coefficients 
$
\sum_{k \geq 0} q_{s,t}^{(k)} z^{-\frac{k}{2}} = 
z^{-\frac{s+t}{2}} a_{s,t}(z \cdot \Lambda)$.
With this definition, \eqref{eq:aF} and the formula under Step 3 in the statement
correctly compute $A_k$.

It remains to prove that the coefficients $q_{s,t}^{(k)}$ are computed correctly by Step 2 in the statement.
Rewrite \eqref{eq:aF} using the definition of $Q_k(x,y)$, before applying the derivative operator $z\frac{\partial}{\partial z}$ on both sides. This gives
\begin{align*}
z\frac{\partial}{\partial z}\left(\sum_{k \geq 0}Q_k(x,y) z^{-\frac{k}{2}}\right)
&=
z\frac{\partial}{\partial z}\exp\left(\sum_{k \geq 1} z^{-\frac{k}{2}}F_k(x,y)\right)\\
\Rightarrow
-\sum_{k \geq 0}\frac{k}{2}Q_k(x,y) z^{-\frac{k}{2}}
&=
-\left(\sum_{m \geq 0}Q_m(x,y) z^{-\frac{m}{2}}\right)\sum_{k \geq 1} \frac{k}{2}z^{-\frac{k}{2}}F_k(x,y).
\end{align*}
The recursive relation between $Q_m$ and $F_k$ follows
by comparing the $z^{-\frac{m}{2}}$ coefficients on both sides of this equation.
\end{proof}

\section{Asymptotics and critical points}
\label{sec:asymptotics}

In this section, we study the asymptotic behavior of the coefficients 
$A_n$ in Theorem~\ref{thm:laplace-graphs} for large $n$. 
Here, we will restrict ourselves to 
\emph{regular} edge-bicolored graphs, 
meaning that each vertex has a fixed degree $k \geq 3.$
For fixed coefficients $\ol_{u,w}$ given for $u,w\geq 0$ with $u+w=k$, we study the weighted sum over graphs
\[
A_n = \sum_{G \in \GG_{-n}^k} 
        \frac{1}{
\left\lvert \Aut(G) \right\rvert} 
\prod \limits _{v\in V^G}
\ol_{\deg(v)},
\]
where $\GG_{-n}^k$ is the set of all regular (edge-bicolored) graphs with vertex degree $k$
and Euler characteristic $-n=|V^G|-|E^G|$.
As for each $k$-regular graph $G$ we have $k|V^G| = 2|E^G|$,
all graphs in $\GG_{-n}^k$ have $\frac{2n}{k-2}$ vertices and 
$\frac{n k}{k-2}$ edges.
It is convenient to define the homogeneous polynomial
$$V(x,y) = \sum_{\substack{u,w \geq 0\\ u+w = k}} \ol_{u,w} \frac{x^u y^w}{u!w!} \in \R[x,y].$$
Let $\Phi$ be the set of global maxima of the function 
\[
    S^1=  \{ (x,y) \in \R^2 \,:\, x^2+y^2=1 \} \rightarrow \R_{\geq 0},\quad (x,y) \mapsto |V(x,y)|.
\]
A point $(x,y) \in \Phi$ is \emph{non-degenerate} if
$k^2V(x,y) \neq \left( \frac{\partial^2 V}{\partial x^2}(x,y) + \frac{\partial^2 V}{\partial y^2}(x,y) \right)$. 

\begin{proposition}
\label{prop:AnAsy}
Let $M = \frac{k}{k-2}$ and $K=\frac{2}{k-2}$.
If $A_n$, $V$, $\ol_{u,w}$ and $\Phi$ are related as described above and all extrema in $\Phi$ are non-degenerate, then
$$
A_n \sim 
\begin{cases}
\frac{1}{2\sqrt{2}\pi}k^{nM+\frac{1}{2}}K^{n-\frac{1}{2}}\Gamma(n) 
\sum\limits_{(x,y) \in \Phi} \frac{V(x,y)^{nK}}{\sqrt{B(x,y)}}
&\text{ if } nK, nM \in \Z,
\\
0 &\text{ else,}
\end{cases}
$$
where
$$
B(x,y) = 
k^2 - 
\frac{ \frac{\partial^2 V}{\partial x^2}(x,y) + \frac{\partial^2 V}{\partial y^2}(x,y)}{ V(x,y) } \quad \text{ for } (x,y)\in S^1.
$$
\end{proposition}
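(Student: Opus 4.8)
The plan is to reduce $A_n$ to a planar Gaussian integral and then extract its large-$n$ behaviour by the Laplace method. First I would restrict Proposition~\ref{prop:genfun2colgraphs} (equivalently Theorem~\ref{thm:laplace-graphs}) to the $k$-regular case, i.e.\ set $\lambda_{u,w}=0$ unless $u+w=k$. A $k$-regular graph with $m$ vertices has $|E^G|=km/2$ and $\chi=-m(k-2)/2$, so $\chi=-n$ forces $m=nK$ and $|E^G|=nM$; since $nM-nK=n$, both $nM,nK$ are integers together, and when they are not, $\GG_{-n}^k=\emptyset$ and $A_n=0$, which is the ``else'' case. After the substitution, the generating function in \eqref{eq:coeffs_extraction_gen_funct} becomes $\exp(V(x,y))=\sum_m V(x,y)^m/m!$, so the $nK$-vertex part of $a_{2s,2t}$ equals $\frac{1}{(nK)!}[x^{2s}y^{2t}]V(x,y)^{nK}$, nonzero only when $s+t=nM$. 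This yields the closed form
\[
A_n=\frac{1}{(nK)!}\sum_{\substack{s,t\geq 0\\ s+t=nM}}(2s-1)!!\,(2t-1)!!\,[x^{2s}y^{2t}]\,V(x,y)^{nK}.
\]
Using $\int_{\R}u^{2s}e^{-u^2/2}\,\d u=\sqrt{2\pi}\,(2s-1)!!$ and the invariance of the Gaussian weight under $(x,y)\mapsto(\pm x,\pm y)$ (which kills every monomial of $V^{nK}$ other than the $x^{2s}y^{2t}$), this becomes the exact identity
\[
A_n=\frac{1}{2\pi\,(nK)!}\int_{\R^2}V(x,y)^{nK}\,e^{-(x^2+y^2)/2}\,\d x\,\d y,
\]
a convergent integral since $V$ is a polynomial.

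Next I would pass to polar coordinates $(x,y)=(r\cos\theta,r\sin\theta)$: homogeneity gives $V^{nK}=r^{knK}\phi(\theta)^{nK}$ with $\phi(\theta):=V(\cos\theta,\sin\theta)$, and since $knK/2=nM$ the radial integral evaluates to $\int_0^\infty r^{knK+1}e^{-r^2/2}\,\d r=2^{nM}\Gamma(nM+1)$, so that
\[
A_n=\frac{2^{nM}\,(nM)!}{2\pi\,(nK)!}\int_0^{2\pi}\phi(\theta)^{nK}\,\d\theta.
\]
Now I would apply the one-variable Laplace method to $\int_0^{2\pi}\phi^{nK}\,\d\theta$. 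It concentrates near the maximizers of $|\phi|$ on $S^1$, i.e.\ near the set $\Phi$ (finite under the non-degeneracy hypothesis), with $V_{\max}:=\max_{S^1}|V|$. Near $\theta_0\in\Phi$ one writes $\phi^{nK}=\sgn(\phi(\theta_0))^{nK}|\phi|^{nK}=\sgn(\phi(\theta_0))^{nK}e^{nK\log|\phi|}$ and applies the standard Gaussian estimate. The curvature that appears is computed from Euler's identity $x\,\partial_xV+y\,\partial_yV=kV$: differentiating it, taking the trace of the Hessian, and using $\nabla V\cdot(x,y)=kV$ on $S^1$ gives, for $(x,y)=(\cos\theta,\sin\theta)$,
\[
\phi''(\theta)=\frac{\partial^2 V}{\partial x^2}(x,y)+\frac{\partial^2 V}{\partial y^2}(x,y)-k^2V(x,y)=-\,B(x,y)\,V(x,y).
\]
Hence $\bigl|(\log|\phi|)''(\theta_0)\bigr|=|\phi''(\theta_0)|/V_{\max}=B(x_0,y_0)$; the non-degeneracy condition is exactly $B(x_0,y_0)\neq 0$, and since $\theta_0$ is a local maximum of $\phi$ when $\phi(\theta_0)>0$ and a local minimum when $\phi(\theta_0)<0$, in either case $B(x_0,y_0)>0$ so the square roots below are legitimate. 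Summing over $\Phi$ and using $\sgn(\phi(\theta_0))^{nK}V_{\max}^{nK}=V(x_0,y_0)^{nK}$ gives
\[
\int_0^{2\pi}\phi(\theta)^{nK}\,\d\theta\ \sim\ \sqrt{\frac{2\pi}{nK}}\,\sum_{(x,y)\in\Phi}\frac{V(x,y)^{nK}}{\sqrt{B(x,y)}}.
\]

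It then remains to substitute this back and collect constants. Applying Stirling's formula to $2^{nM}(nM)!/(nK)!$, together with $M/K=k/2$, $2M=kK$, $nM-nK=n$, and $n^{\,n-1/2}e^{-n}\sim\Gamma(n)/\sqrt{2\pi}$, all powers of $n$ cancel and the numerical factors collapse to $\tfrac{1}{2\sqrt2\,\pi}$, yielding
\[
A_n\ \sim\ \frac{1}{2\sqrt2\,\pi}\,k^{nM+\frac12}K^{n-\frac12}\,\Gamma(n)\sum_{(x,y)\in\Phi}\frac{V(x,y)^{nK}}{\sqrt{B(x,y)}},
\]
as claimed.

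The step I expect to be the main obstacle is the angular Laplace estimate, and in particular the sign bookkeeping it entails: $\phi$ changes sign on $S^1$, the set $\Phi$ may simultaneously contain a point with $V>0$ and a point with $V<0$, and each enters the sum with a factor $\sgn(V(x,y))^{nK}$. When these factors are opposite — for instance when $k$ is odd and $nK$ is odd, in which case actually $\GG_{-n}^k=\emptyset$ and $A_n=0$ — the leading terms cancel and the asymptotic formula degenerates to $0$, and one must check that this is consistent. One should also verify that the non-degeneracy hypothesis excludes the flat situations (such as $|\phi|$ constant), so that $\Phi$ is finite and the Laplace estimates are the usual Gaussian ones.
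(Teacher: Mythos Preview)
Your proposal is correct and follows essentially the same route as the paper: you derive the exact integral representation (what the paper states separately as Lemma~\ref{lmm:AnHomo}) via Proposition~\ref{prop:genfun2colgraphs}, Gaussian moments, and polar coordinates, and then apply the one-dimensional Laplace method to the angular integral together with Stirling's formula. Your derivation of $\phi''(\theta)=V_{xx}+V_{yy}-k^2V$ from Euler's identity is in fact more explicit than the paper, which simply asserts the Taylor expansion with $B$; your discussion of the sign bookkeeping and possible cancellations when $\Phi$ contains points with $V$ of opposite sign is likewise more careful than the paper's treatment.
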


We will prove this theorem by first proving an 
integral representation of the coefficients $A_n$.
Afterwards, we will apply the Laplace method in its 
 one dimensional form to provide an asymptotic expression for this 
integral in the large $n$ limit.

\begin{lemma}
\label{lmm:AnHomo}
Let $M = \frac{k}{k-2}$ and $K=\frac{2}{k-2}$.
For a given integer $n \geq 0$ such that $nK$ and $nM$ are integers, we have
\begin{gather*}
A_n = \frac{2^{nM} (nM)! }{ 2\pi \cdot (nK)!} 
\int_{-\pi}^\pi  V(\cos\varphi ,\sin \varphi)^{nK} \dd \varphi.
\end{gather*}
If $nK$ or $nM$ is not an integer then $A_n =0$.
\end{lemma}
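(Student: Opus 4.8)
The plan is to obtain the integral representation directly from the combinatorial generating function of Proposition~\ref{prop:genfun2colgraphs}, specialized to $k$-regular graphs, and then to convert the resulting finite sum of double factorials into a Gaussian integral that separates in polar coordinates. First I would substitute $\lambda_{u,w}=\ol_{u,w}$ for $u+w=k$ and $\lambda_{u,w}=0$ otherwise into Proposition~\ref{prop:genfun2colgraphs}. Then $\prod_{v\in V^G}\lambda_{\deg(v)}$ vanishes unless $G$ is $k$-regular, so the left-hand side becomes $\sum_{G\ k\text{-regular}}\eta^{|E^G|}|\Aut(G)|^{-1}\prod_v\ol_{\deg(v)}$. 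For a $k$-regular graph the identities $k|V^G|=2|E^G|$ and $\chi(G)=-n$ force $|V^G|=nK$ and $|E^G|=nM$; hence (using $nM-nK=n\in\Z$, so $nM\in\Z\iff nK\in\Z$) the coefficient of $\eta^{nM}$ on the left is exactly $A_n$ when $nK$ is an integer, and the index set $\GG^k_{-n}$ is empty — so $A_n=0$ — otherwise. On the right-hand side, after the same substitution \eqref{eq:coeffs_extraction_gen_funct} reads $\sum_{s,t}a_{s,t}(\lambda)x^sy^t=\exp(V(x,y))$, so extracting the $\eta^{nM}$ coefficient gives
\[
A_n=\sum_{\substack{s,t\geq 0\\ s+t=nM}}(2s-1)!!\,(2t-1)!!\,[x^{2s}y^{2t}]\exp\bigl(V(x,y)\bigr).
\]

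Next I would exploit homogeneity: since $V$ is homogeneous of degree $k$, the term $V^j/j!$ of $\exp(V)$ is homogeneous of degree $kj$, and $kj=2(s+t)=2nM$ precisely when $j=nK$. Hence $[x^{2s}y^{2t}]\exp(V)=\tfrac{1}{(nK)!}[x^{2s}y^{2t}]V(x,y)^{nK}$ for all $s+t=nM$. I then invoke the Gaussian moment identities $\tfrac{1}{\sqrt{2\pi}}\int_{\R}e^{-x^2/2}x^{2s}\dd x=(2s-1)!!$ and $\int_{\R}e^{-x^2/2}x^{2s+1}\dd x=0$. Since the odd-degree monomials of the homogeneous polynomial $V^{nK}$ integrate to zero against the weight $e^{-(x^2+y^2)/2}$ and the even–even monomials are exactly those picked out by the sum above, we may add the vanishing terms freely to obtain
\[
A_n=\frac{1}{(nK)!}\cdot\frac{1}{2\pi}\int_{\R^2}e^{-(x^2+y^2)/2}\,V(x,y)^{nK}\,\dd x\,\dd y.
\]

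Finally, passing to polar coordinates $x=r\cos\varphi$, $y=r\sin\varphi$ and using $V(r\cos\varphi,r\sin\varphi)=r^{k}V(\cos\varphi,\sin\varphi)$ together with $k\cdot nK=2nM$, the integral factors as $\bigl(\int_0^\infty e^{-r^2/2}r^{2nM+1}\dd r\bigr)\bigl(\int_{-\pi}^{\pi}V(\cos\varphi,\sin\varphi)^{nK}\dd\varphi\bigr)$; the substitution $u=r^2/2$ evaluates the radial factor to $2^{nM}\Gamma(nM+1)=2^{nM}(nM)!$, and collecting constants yields the asserted formula. The argument presents no genuine obstacle — all sums are finite and all integrands are polynomials times a Gaussian, so there are no convergence or interchange subtleties; the only points requiring care are the bookkeeping that identifies $A_n$ with the $\eta^{nM}$-coefficient (and handles the non-integral case) and the remark that parity-odd monomials of $V^{nK}$ contribute to neither the combinatorial sum nor the Gaussian integral, which is what lets one replace the partial sum by the full homogeneous polynomial $V^{nK}$.
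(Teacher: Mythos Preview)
Your proposal is correct and follows essentially the same route as the paper's proof: specialize Proposition~\ref{prop:genfun2colgraphs} to the $k$-regular case, use homogeneity of $V$ to identify $a_{2s,2t}(\ol)$ with $\frac{1}{(nK)!}[x^{2s}y^{2t}]V^{nK}$, replace the double factorials by Gaussian moments to pass to an integral over $\R^2$, and then separate in polar coordinates. Your bookkeeping is slightly more explicit in places (the observation $nM-nK=n$ so that $nM\in\Z\iff nK\in\Z$, and the remark that parity-odd monomials vanish on both sides), but there is no substantive difference in strategy.
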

\begin{proof}
A $k$-regular graph
has $nM$ edges and $nK$ vertices, so
$nM$ and $nK$ must be integers; otherwise $A_n =0$. We
will assume the former.
By Proposition~\ref{prop:genfun2colgraphs},
$$
A_n = \sum_{\substack{s,t\geq 0\\ s+t=nM}} 
(2s-1)!! \cdot (2t-1)!! \cdot a_{2s,2t}(\ol).
$$
If $s+t = nM$, it follows from \eqref{eq:coeffs_extraction_gen_funct} that $a_{2s,2t}(\Lambda)$ is a homogeneous polynomial  of degree $nK$. Because $\exp(X) = \sum_{N \geq 0} \frac{X^N}{N!}$, it also follows that $a_{2s,2t}(\Lambda)$ is the coefficient in front of $x^{2s}y^{2t}$ in the quotient $V(x,y)^{nK}/(nK)!$, since $V$ is homogeneous.
Using $\frac{1}{\sqrt{ 2\pi} }\int_\R e^{-\frac{x^2}{2}} x^{2s}\dd x = (2s-1)!!$ and $\int_\R e^{-\frac{x^2}{2}} x^{2s+1}\dd x = 0$ for integers $s$, we obtain 
\begin{gather*}
A_n = \frac{1}{2\pi \cdot (nK)!} 
\int_{\R^2} e^{-\frac{x^2}{2} - \frac{y^2}{2} }\, V(x,y)^{nK} \dd x \dd y.
\end{gather*}
We can pass to polar coordinates and use $V(rx,ry) = r^k V(x,y)$ together with 
$$
\int_0^\infty e^{-\frac{r^2}{2}} r^{nkK+1} \dd r =
\int_0^\infty e^{-q} (2q)^{nkK/2} \dd q = 2^{nM} (nM)!
$$
to prove the lemma.
\end{proof}

\begin{proof}[Proof of Proposition~\ref{prop:AnAsy}]
We are interested in the cases in which $A_n\neq 0$.
When $n$ is large, the main contribution to the integral 
in the statement of Lemma~\ref{lmm:AnHomo} comes 
from angles $\varphi$ where $|V(\cos \varphi,\sin \varphi)|$ is 
maximal. Let $\varphi_c$ be the location of such a maximum.
By definition, we have $(\cos \varphi_c,\sin \varphi_c) \in \Phi$.
Near this maximum, we get the Taylor expansion
$$
f_{\varphi_c}(\varphi):=
\log \frac{V(\cos \varphi, \sin \varphi)}{V(\cos \varphi_c, \sin \varphi_c)}
=
-
B(\cos \varphi_c, \sin \varphi_c)
\frac{(\varphi -\varphi_c)^2}{2}
+\bigO((\varphi -\varphi_c)^3),
$$
where $B(\cos \varphi_c, \sin \varphi_c)$ is defined as in the statement.
Because $\varphi_c$ is a maximum of $|V(\cos \varphi_c, \sin \varphi_c)|$, 
we have $B(\cos \varphi_c, \sin \varphi_c) \geq 0$. Our assumption that all the maxima are non-degenerate hence implies that $B(\cos \varphi_c, \sin \varphi_c) > 0$.\par 

We may therefore write, for some sufficiently small $\varepsilon > 0$,
\begin{gather*}
A_n = \frac{2^{nM} (nM)! }{ 2\pi \cdot (nK)!} 
\sum_{(\cos \varphi_c,\sin \varphi_c) \in \Phi}
\!\!\!V(\cos \varphi_c, \sin \varphi_c)^{nK}
\int_{\varphi_c-\varepsilon}^{\varphi_c+\varepsilon} e^{nK f_{\varphi_c}(\varphi)} \dd \varphi + R_1(n,\varepsilon).
\end{gather*}
From the Taylor expansion of the function $f_{\varphi_c}(\varphi)$ and
Lemma~\ref{lmm:AnHomo},
it follows by the same reasoning as in the proof of Proposition~\ref{prop:laplace} that the remainder term respects the bound
$|R_1(n,\varepsilon)| \leq C_1 \exp({-C_2 n \varepsilon^2})$
with some constants $C_1,C_2>0$.
Specifying $\varepsilon = n^{-\gamma}$ with $\gamma \in (\frac13,\frac12)$ allows to truncate the Taylor expansion of $f_{\varphi_c}$ after the second term without changing asymptotic behavior in the $n\rightarrow \infty$ limit. Hence,
$$
\int_{\varphi_c-\varepsilon}^{\varphi_c+\varepsilon} e^{nK f_{\varphi_c}(\varphi)} \dd \varphi =
\int_{-\varepsilon}^{\varepsilon} \exp\left(-nK \frac{B(\cos\varphi_c,\sin \varphi_c)}{2} \varphi^2\right) \dd \varphi + R_2(n, \varepsilon).
$$
The remainder term fulfills $|R_2(n, \varepsilon)| < C_3 n^{\frac12} \varepsilon^3$ for some $C_3>0$.
Again, as in the proof of Proposition~\ref{prop:laplace}, we may complete the Gaussian integral to find that 
$$
\int_{\varphi_c-\varepsilon}^{\varphi_c+\varepsilon} e^{nK f_{\varphi_c}(\varphi)} \dd \varphi = \sqrt{ \frac{2\pi}{nK \cdot B(\cos\varphi_c,\sin \varphi_c)}} + \bigO (n^{-1}).
$$
The result follows from Stirling's formula $\Gamma(n) \sim \sqrt{2\pi n^{-1}} n^n e^{-n}$ as $n \rightarrow \infty$.
\end{proof}

\begin{example}\label{ex:Ising_critVsphere}
    To continue the running example of the Ising model (cf.~Example~\ref{ex:Ising_model_deg4_part2}), let $V(x,y) = \frac{x^4}{4!} + \lambda \frac{x^2 y^2}{4} + \lambda^2 \frac{y^4}{4!}$.
    We want to find the critical points of $V$ on the circle, that means the points $(x,y)\in\R^2$ with $x^2+y^2=1$ satisfying
    \begin{equation}\label{eq:sys_sphere}
    y \frac{\partial V}{\partial x}(x,y) = x \frac{\partial V}{\partial y}(x,y).
    \end{equation}
    We get the following eight critical points:
    \begin{equation}\label{eq:critV}
     (\pm 1, 0), (0,\pm 1), \left( \pm \frac{\sqrt{\lambda(\lambda-3)}}{\sqrt{\lambda^2-6 \lambda+1}}, \pm \frac{\sqrt{1-3\lambda}}{\sqrt{\lambda^2-6 \lambda+1}} \right) \in \R^2.
    \end{equation}
    \begin{figure}[ht]
\ifkeepslowthings
        \centering
        \input{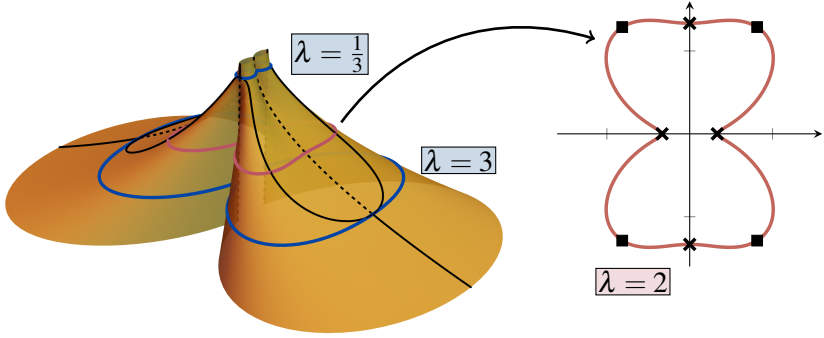}
\fi
        \caption{The system \eqref{eq:sys_sphere} for the function $V$ from Example \ref{ex:Ising_critVsphere}. Left: all values of $\lambda \in (0,4)$ on the (reversed) vertical axis. At each level $\lambda=\text{const.}$ the black continuous curves are the maxima; the dashed curves are the minima. In blue, the curves for $\lambda=\frac{1}{3}$ and $\lambda=3$ where the behavior of the maxima changes.
        Right: the section $\lambda=2$, with its maxima (squares) and its minima (crosses).
        }
        \label{fig:sing_various_lambdas_3D}
    \end{figure}
Our case of interest is $\lambda >0$. Then, the last $4$-tuple of points is real if and only if $\lambda\in [\frac{1}{3}, 3]$, and in that interval those are the maxima of $|V|$ on $S^1$. For $\lambda<\frac{1}{3}$, the maxima are $(\pm 1, 0)$, whereas for $\lambda>3$, the maxima are $(0,\pm 1)$. Figure \ref{fig:sing_various_lambdas_3D} displays the function $(V(x,y) x, V(x,y) y)$ and its critical points, for $\lambda\in (0,4)$.

We can now use Proposition \ref{prop:AnAsy} to find
$A_n \sim c \:\Gamma(n) \alpha^n$,
where $c = c(\lambda)$ and $\alpha = \alpha(\lambda)$ are piecewise defined as
\begin{center}
    {\def\arraystretch{1.8}
    \begin{tabular}{c|c|c}
         & $\alpha(\lambda)$ & $c(\lambda)$ \\
        \hline
       $0<\lambda<\tfrac{1}{3}$  & $\frac{2}{3}$ & $\frac{1}{\pi} \sqrt{\frac{1}{2-6\lambda}}$ \\
       $\tfrac{1}{3}<\lambda<3$  & $\frac{-16\lambda^2}{3\lambda^2-18\lambda+3}$ & $\frac{1}{\pi} \sqrt{\frac{8\lambda}{-3\lambda^2+10\lambda -3}}$ \\
       $\lambda>3$  & $\frac{2\lambda^2}{3}$ & $\frac{1}{\pi} \sqrt{\frac{\lambda}{2\lambda -6}}$ \\
    \end{tabular}
    }
\end{center}
The function $\alpha$ is continuous, it is not $C^1$-differentiable at $\lambda=\frac{1}{3}$, and it is $C^1$- but not $C^2$-differentiable at $\lambda=3$. On the other hand, the limits of $c(\lambda)$ at $\frac{1}{3}$, $3$ go to infinity from both sides.
This can be observed in Figure \ref{fig:Ising_deg4_alpha}.
The points $\lambda=\frac13$ and $\lambda=3$ where the functions $\alpha(\lambda)$ and $c(\lambda)$ are non-analytic 
are \emph{phase transition} points.
Phase transitions are of pivotal interest in statistical physics. Here, we find the phase transitions of the Ising model on a random 4-regular graph. In each of the three regions for the parameter $\lambda$, the statistical system is expected to exhibit intrinsically different behaviors.

Note that using Proposition \ref{prop:algo} we can also compute $A_n$ for large $n$ and solve for $\alpha$ and $c$ numerically. For details see our implementation at \cite{mathrepo}.
    \begin{figure}[!ht]
        \centering
\ifkeepslowthings
        \includegraphics[width = 0.45\textwidth]{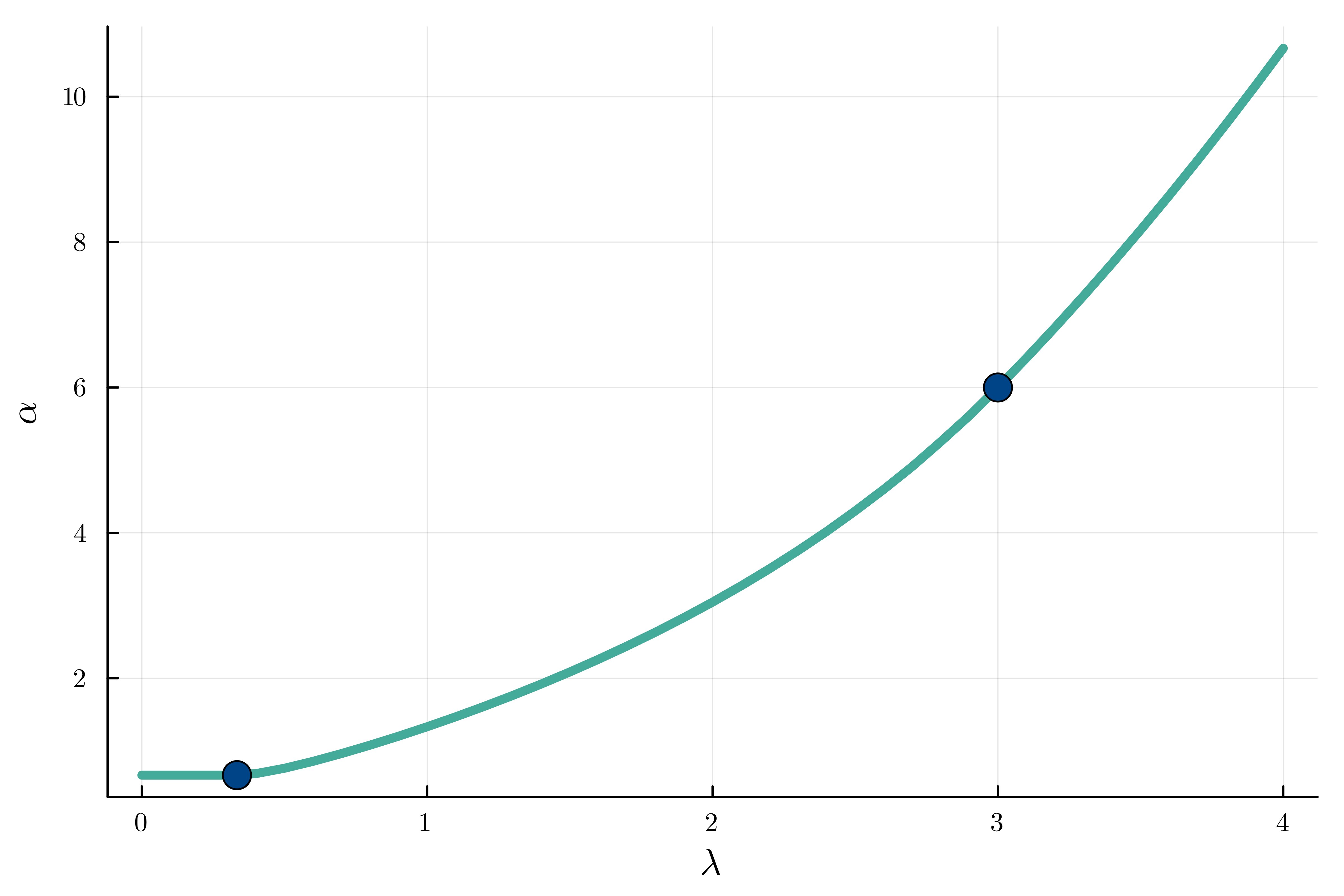}
        \quad
        \includegraphics[width = 0.45\textwidth]{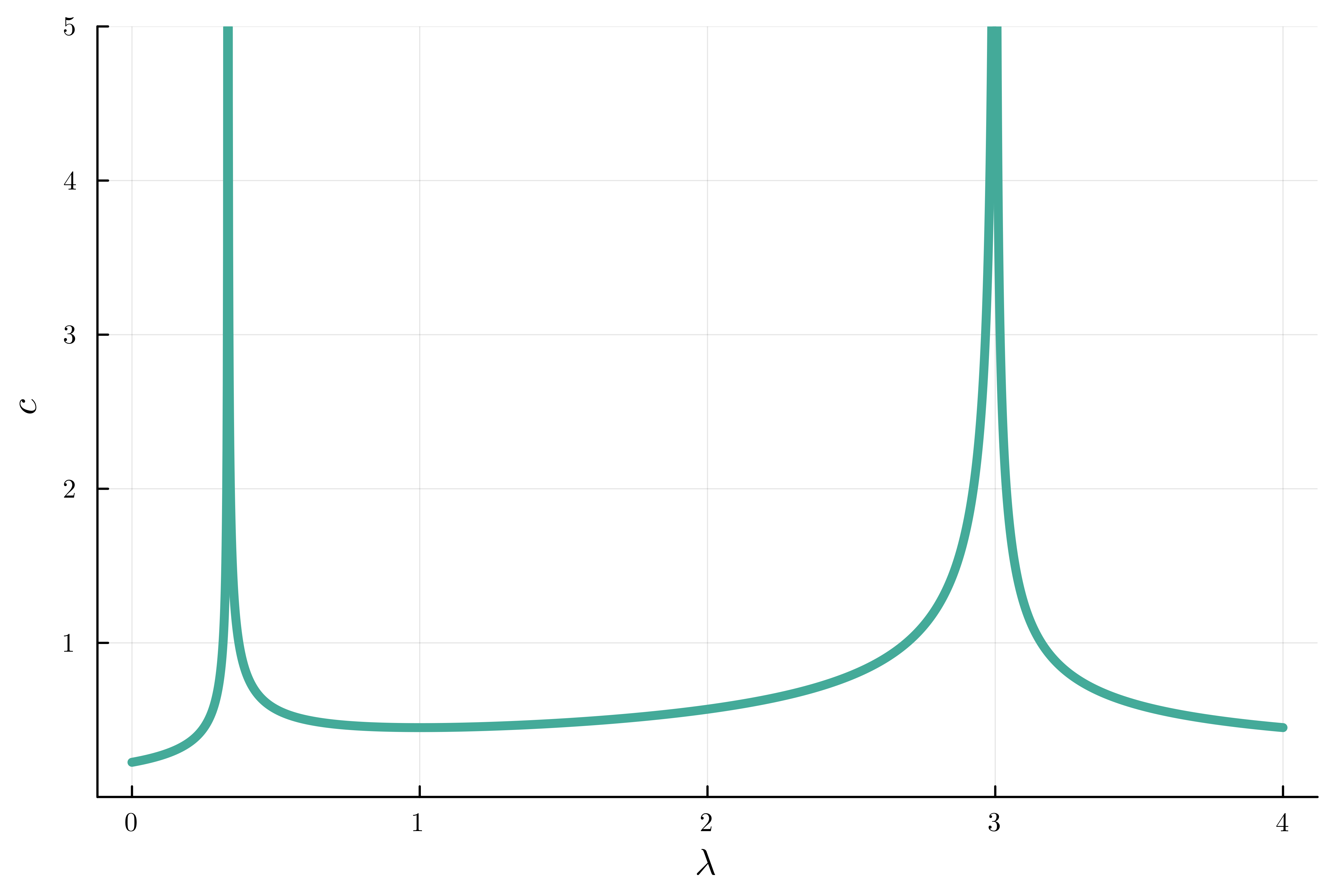} 
\fi
        \caption{The behavior of $\alpha(\lambda)$ and $c(\lambda)$ in the Ising model from Example \ref{ex:Ising_critVsphere}. 
        The phase transitions at $\lambda=\frac{1}{3}, 3$ can be detected in both quantities.}
        \label{fig:Ising_deg4_alpha}
    \end{figure}
\end{example}

It is common belief in physics that the asymptotic behavior of $A_n$ depends on the critical points of $g(x,y) = -\frac{x^2}{2}-\frac{y^2}{2} + V(x,y)$ (see, e.g.,~\cite{le2012large}). Moreover, it is well-known, also in applied mathematics, that identifying the critical point that contributes most to the asymptotics is a complicated \emph{connection problem} \cite{berry1990hyperasymptotics}.
With this in mind, we rephrase Proposition \ref{prop:AnAsy} in terms of the critical points of $g$ instead of those of $V$ restricted to the sphere. We write $\operatorname{crit}_D f$ for the set of critical points of $f$ restricted to the domain $D$. Let
\[
\Psi = \left\{ (w,z) \in \crit_{\C\cdot\R^2}g\!\setminus\! \{\mathbf{0}\} \,:\, \|(w,z)\|\leq \|(w',z')\| \, \forall (w',z') \in \crit_{\C\cdot\R^2}g\!\setminus\! \{\mathbf{0}\} \right\},
\]
where $\C\cdot\R^2$ is the set of complex points $(w,z)$ whose ratio (when well-defined) is real.
We call points in $\Psi$ \emph{non-degenerate} if the Hessian (the matrix of second derivatives) of $g$ has full rank.

\begin{theorem}\label{thm:crit_g}
    Assume that $A_n$, $g$, and $\Psi$ are related as described above and all extrema in $\Psi$ are non-degenerate. Then 
    \begin{equation}
    \label{eq:niceFormula}
    A_n \sim 
    \frac{1}{2\pi}
    \Gamma(n)
    \sum\limits_{(w,z) \in \Psi} \frac{(-g(w,z))^{-n}}{\sqrt{-\det H_g(w,z)}}.
    \end{equation}
\end{theorem}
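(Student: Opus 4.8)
The plan is to show that the critical-point data appearing in \eqref{eq:niceFormula} is exactly a repackaging of the data $\Phi$, $B(x,y)$, $M$, $K$ in Proposition~\ref{prop:AnAsy}. Since $g(x,y) = -\tfrac12(x^2+y^2) + V(x,y)$ with $V$ homogeneous of degree $k$, a critical point $(w,z)$ of $g$ on $\C\cdot\R^2$ satisfies $\nabla V(w,z) = (w,z)$. Writing $(w,z) = r\,(x,y)$ with $(x,y)\in S^1$ and $r$ a (possibly complex) scalar — this is legitimate precisely because $\C\cdot\R^2$ consists of points with real ratio — homogeneity gives $r^{k-1}\nabla V(x,y) = r(x,y)$, so $(x,y)$ is a critical point of $V$ restricted to $S^1$, i.e.\ it solves \eqref{eq:sys_sphere}, and $r^{k-2} \nabla V(x,y) = (x,y)$. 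Taking the inner product with $(x,y)$ and using Euler's relation $\langle \nabla V(x,y),(x,y)\rangle = kV(x,y)$ yields $r^{k-2} = 1/(kV(x,y))$; hence $\|(w,z)\|^2 = |r|^2 = (k|V(x,y)|)^{-2/(k-2)} = (k|V(x,y)|)^{-K}$. Therefore minimizing $\|(w,z)\|$ over nonzero critical points is the same as maximizing $|V(x,y)|$ over $S^1$, so $\Psi$ is in bijection with $\{(r,(x,y)) : (x,y)\in\Phi,\ r^{k-2} = 1/(kV(x,y))\}$. This also gives $-g(w,z) = \tfrac12 r^2 - V(r x, r y) = \tfrac12 r^2 - r^k V(x,y) = \tfrac12 r^2(1 - r^{k-2} k V(x,y)\cdot \tfrac1k\cdot k)$; more carefully, $r^k V(x,y) = r^2 \cdot r^{k-2} V(x,y) = r^2/k$, so $-g(w,z) = \tfrac12 r^2 - \tfrac1k r^2 = \tfrac{k-2}{2k} r^2 = \tfrac{1}{kM} r^2$, whence $(-g(w,z))^{-n} = (kM)^n r^{-2n} = (kM)^n (kV(x,y))^{nK}$.

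Next I would compute $\det H_g(w,z)$. Since $H_g = -\mathrm{Id} + H_V$ and $H_V$ is homogeneous of degree $k-2$, we have $H_V(w,z) = r^{k-2} H_V(x,y)$, and $r^{k-2} = 1/(kV(x,y))$. So $H_g(w,z) = -\mathrm{Id} + \tfrac{1}{kV(x,y)} H_V(x,y)$, and $-\det H_g(w,z) = -\det\!\big(\tfrac{1}{kV}H_V - \mathrm{Id}\big)$. Expanding the $2\times 2$ determinant and using, at a critical point of $V|_{S^1}$, the standard identities relating the tangential second derivative, the trace $\Delta V = \partial_x^2 V + \partial_y^2 V$, and $\langle\nabla V,(x,y)\rangle = kV$ (Euler), one finds that this determinant equals a simple expression in $B(x,y) = k^2 - \Delta V(x,y)/V(x,y)$. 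Concretely I expect $-\det H_g(w,z) = \tfrac{(k-1)\,B(x,y)}{k^2 V(x,y)} \cdot (\text{sign/constant})$ — the precise constant to be pinned down by the bookkeeping — so that $\sqrt{-\det H_g(w,z)}$ contributes a factor proportional to $\sqrt{B(x,y)}$ in the denominator together with powers of $k$ and $V(x,y)$. The key relation to establish here is that the Hessian eigenvalue transverse to $S^1$ is governed by the radial curvature (which Euler homogeneity makes explicit: $H_V(x,y)\cdot(x,y)^{\mathsf T} = (k-1)\nabla V(x,y)^{\mathsf T}$), while the eigenvalue along $S^1$ reproduces $B(x,y)$.

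Finally I would substitute these two computations into \eqref{eq:niceFormula} and match term by term against the formula in Proposition~\ref{prop:AnAsy}: the $\Gamma(n)$ factors agree immediately; the $(-g(w,z))^{-n} = (kM)^n(kV)^{nK}$ factor combines with the $1/\sqrt{-\det H_g}$ factor to reproduce $k^{nM+1/2} K^{n-1/2} V(x,y)^{nK}/\sqrt{B(x,y)}$ up to the overall constant $\tfrac{1}{2\pi}$ versus $\tfrac{1}{2\sqrt2\pi}$, which should be absorbed by the square-root constants from the Hessian. Also note that when $(x,y)\in\Phi$ the two choices of sign of $(x,y)$ on $S^1$ versus the $(k-2)$-th roots $r$ must be accounted for so that $|\Phi|$ sphere-maxima and $|\Psi|$ critical points match with correct multiplicity; since $V$ is even in the running example this is a parity check rather than a difficulty. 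The non-degeneracy hypothesis on $\Psi$ is equivalent, via $\det H_g(w,z) \propto B(x,y)$, to non-degeneracy of the extrema in $\Phi$, so Proposition~\ref{prop:AnAsy} applies. The main obstacle is the Hessian determinant computation: getting all the constants, the power of $k$, and the sign of $-\det H_g$ exactly right so that the two asymptotic formulas coincide — in particular verifying $\sqrt{-\det H_g(w,z)}$ is real and positive (so that the complex critical points nonetheless yield a real, positive contribution), which is where the reality constraint "$(w,z)\in\C\cdot\R^2$" and the fact that $(x,y)\in\Phi$ is a genuine maximum (forcing $B(x,y)>0$) are both essential.
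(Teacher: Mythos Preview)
Your strategy is exactly the paper's: reduce to Proposition~\ref{prop:AnAsy} via the radial correspondence $(w,z)=r(x,y)$ with $r^{k-2}=1/(kV(x,y))$, then rewrite $V(x,y)^{nK}$ and $B(x,y)$ as $-g(w,z)$ and $\det H_g(w,z)$. Three places where your bookkeeping needs tightening to match the paper:
\begin{itemize}
\item From $-g(w,z)=\tfrac{k-2}{2k}r^2$ you should get $\tfrac{1}{2M}r^2$, not $\tfrac{1}{kM}r^2$; this gives $(-g)^{-n}=(2M)^n(kV)^{nK}=k^{nM}K^{n}V^{nK}$, which is what cancels the prefactor in Proposition~\ref{prop:AnAsy}.
\item Your eigenvalue route for the Hessian does work cleanly and gives the exact constant: the radial eigenvector $(x,y)$ has $H_g$-eigenvalue $k-2$ (from Euler), the tangential eigenvector has eigenvalue $-B(x,y)/k$, so $\det H_g(w,z)=-\tfrac{k-2}{k}B(x,y)$. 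This is the identity the paper states and is what makes $-\det H_g>0$ equivalent to non-degeneracy in $\Phi$.
\item The multiplicity is not merely a parity check in the example; it is the general factor $\tfrac{2}{k-2}$: each antipodal pair $\{(x,y),(-x,-y)\}\subset\Phi$ yields the same set of $k{-}2$ points $(w,z)=r(x,y)\in\Psi$ as $r$ ranges over the $(k{-}2)$th roots of $1/(kV(x,y))$. This factor is precisely what turns $\tfrac{1}{2\sqrt2\pi}\sqrt{kK}$ into $\tfrac{1}{2\pi}$.
\end{itemize}
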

\begin{proof}
    Our goal is to express $A_n$ from Proposition \ref{prop:AnAsy} in terms of the critical points of $g$. The first step is to associate the critical points of $V$ to those of $g$. 
    Given $(x,y)\in\crit_{S_1}(V)$, we look for some $\ell \in \C^*$ such that $(\ell x, \ell y)\in\operatorname{crit}_{\C\cdot\R^2}(g)$. Imposing the conditions $\ell x = \frac{\partial V}{\partial x}(\ell x,\ell y)$, $\ell y = \frac{\partial V}{\partial y}(\ell x,\ell y)$, and using homogeneity of $V$, we get 
    \begin{equation}
        \label{eq:ellRoots}
        \ell^{2-k} = k V(x,y).
    \end{equation}
    Therefore, as $k \geq 3$,
    \[
    \max_{(x,y)\in \operatorname{crit}_{S^1}(V)} V(x,y) = \frac{1}{k} \left( \min_{(w,z)\in \operatorname{crit}_{\C^2}(g)} \|(w,z)\| \right)^{2-k},
    \]
    so every element $(w,z)\in\Psi\subset \C\cdot\R^2$ arises as $(\ell x,\ell y)$ for some $(x,y)\in\Phi$.

    Using these considerations, we write the result from Proposition \ref{prop:AnAsy} in terms of the critical points of $g$. 
    At a point $(w,z) = (\ell x, \ell y)\in \Psi$, by \eqref{eq:ellRoots}, we have
    \begin{equation}
    \label{eq:gAsEll}
    g(w,z) = -\frac{\ell^2}{2} + V(w,z) = -\frac{\ell^2}{2} + \ell^k V(x,y) = \ell^2 \frac{2-k}{2k}.
    \end{equation}
    Let $K = \frac{2}{k-2}$ and $M=\frac{k}{k-2}$. Then, we have
    \[
    V(x,y)^{nK} = k^{-nK} \ell^{-2n} = k^{-nK} (-kKg(w,z))^{-n} = k^{-nM} K^{-n} (-g(w,z))^{-n}.
    \]
    This allows to cancel pre-factors in the asymptotic expression for $A_n$ from Proposition \ref{prop:AnAsy}. 
    We are left to rewrite $B$ in terms of $(w,z)$. 
    Notice that the determinant of the Hessian of $g(w,z)$ can be expressed, using \eqref{eq:ellRoots}, as 
    \begin{align*}
    \det H_g(w,z) &= \frac{1}{\ell^2} \det H_g(\ell x, \ell y) = B(x,y)\ell^{k-2} V(x,y) (1-k(k-1)\ell^{k-2}V(x,y))\\
    &= -B(x,y)\frac{k-2}{k},
    \end{align*}
    where $(\ell, x, y)$ are new coordinates on $\C^*\times S^1$, and $(w,z) \in \Psi$.
    Hence,
    \begin{align*}
        A_n &\sim
        \frac{1}{2\sqrt{2}\pi}k^{nM+\frac{1}{2}}K^{n-\frac{1}{2}}\Gamma(n) 
        \sum\limits_{(x,y) \in \Phi} \frac{V(x,y)^{nK}}{\sqrt{B(x,y)}} \\
        &= \frac{1}{4\pi} \sqrt{k(k-2)} \Gamma(n) \frac{2}{k-2} \sum\limits_{(w,z) \in \Psi} \frac{(-g(w,z))^{-n}}{\sqrt{-\frac{k \det H_g(w,z)}{k-2}}}\\
        & = \frac{1}{2\pi} \Gamma(n) \sum\limits_{(w,z) \in \Psi} \frac{(-g(w,z))^{-n}}{\sqrt{-\det H_g(w,z)}},
    \end{align*}
    where the factor $\frac{2}{k-2}$ appears since each of the $k-2$ points $\{(w,z) = (\ell x, \ell y)\}$ in $\Psi$ is counted twice by the corresponding points $\{(x,y),(-x,-y)\}\in\Phi$.
\end{proof}

\begin{remark}
    The formula \eqref{eq:niceFormula} yields $0$ if $nM$ or $nK$ are not integers. Indeed, using \eqref{eq:gAsEll} from the proof above, we can write, for $(w,z)\in\Psi$, 
    \[
    g(w,z) = (l\cdot \zeta_i)^2 \frac{2-k}{2k}, \quad i\in\{1,...,k-2\},
    \]
    where $l\in\R$ and $\zeta_i$ is a $(k-2)$th root of unity, so $(w,z) = (l\zeta_ix,l\zeta_iy)$ for some $(x,y)\in\Phi$. Also $(l\zeta_jx,l\zeta_jy)\in\Psi$ for all $j\in\{1,\dots,k-2\}$. Therefore, the sum in \eqref{eq:niceFormula} becomes
    \begin{align*}
        \sum\limits_{(w,z) \in \Psi} \frac{(-g(w,z))^{-n}}{\sqrt{-\det H_g(w,z)}}
        & \propto 
        \sum_{j=1}^{k-2} \frac{\left( l\cdot\zeta_j \right)^{-2n} \left( \frac{k-2}{k}\right)^{-n} }{\sqrt{-\det H_g(l\zeta_j x, l\zeta_j y)}}\\
        & = \left\{ \begin{array}{ll}
            (k-2) \frac{l^{-2n} \left( \frac{k-2}{k}\right)^{-n}}{\sqrt{-\det H_g(l x, l y)}}& \text{if } (k-2)\mid 2n, \\
            0 & \text{else.}
        \end{array}\right.
    \end{align*}
    The condition $(k-2)\mid 2n$ is equivalent to $nK\in\Z$, which also implies $nM\in\Z$.
\end{remark}

\begin{figure}
    \centering
\ifkeepslowthings
    \begin{tikzpicture}
  \begin{axis}[
    width=1.8in,
    height=2.5in,
    axis lines=center,
    xlabel=\empty,
    ylabel=\empty,
    ylabel style={above right},
    xlabel style={below right},
    xtick={0,-3,3},
    ytick={0,-11,11},        
    xmin=-3.7,xmax=3.7,ymin=-12,ymax=12]
    \addplot+[no markers, samples=200, samples y=0, domain=-sqrt(6):sqrt(6), variable=\t, line width=1.3, color = cb-green]
    ({t}, {(2*sqrt(6-t^2)/sqrt(3)});
    \addplot+[no markers, samples=200, samples y=0, domain=-sqrt(6):sqrt(6), variable=\t, line width=1.3, color = cb-green]
    ({t}, {-(2*sqrt(6-t^2)/sqrt(3)});
    \draw[cb-green, line width = 1.3] (0,-11.8) -- (0, 11.6);

    \addplot+[no markers, samples=200, samples y=0, domain=-sqrt(8):sqrt(8), variable=\t, line width=1.3, color = cb-purple]
    ({t}, {2*sqrt(3)*sqrt(8 - t^2)});
    \addplot+[no markers, samples=200, samples y=0, domain=-sqrt(8):sqrt(8), variable=\t, line width=1.3, color = cb-purple]
    ({t}, {-2*sqrt(3)*sqrt(8 - t^2)});
    \draw[cb-purple, line width = 1.3] (-3.6,0) -- (3.4,0);

    \addplot [only marks, mark=x, line width = 1.3, mark size = 3pt] table {
    0  0
    0  9.79796
    0  -9.79796
    };
    \addplot [only marks, mark=square*, mark size = 2pt] table {
    2.44949  0
    -2.44949  0
    };

    \node at (2.3,-10.5) [rectangle,draw,fill = cb-palegrey,inner sep=0.8pt] {$\lambda = \frac{1}{4}$};
    \end{axis}
\end{tikzpicture}
\,
\begin{tikzpicture}
  \begin{axis}[
    width=1.8in,
    height=2.5in,
    axis lines=center,
    xlabel=\empty,
    ylabel=\empty,
    ylabel style={above right},
    xlabel style={below right},
    xtick={0,-3,3},
    ytick={0,-6,6},        
    xmin=-3.7,xmax=3.7,ymin=-7,ymax=7]
    \addplot+[no markers, samples=200, samples y=0, domain=-sqrt(6):sqrt(6), variable=\t, line width=1.3, color = cb-green]
    ({t}, {sqrt(2/3)*sqrt(6 - t^2)});
    \addplot+[no markers, samples=200, samples y=0, domain=-sqrt(6):sqrt(6), variable=\t, line width=1.3, color = cb-green]
    ({t}, {-(sqrt(6-t^2)*sqrt(2/3)});
    \draw[cb-green, line width = 1.3] (0,-6.9) -- (0, 6.7);

    \addplot+[no markers, samples=200, samples y=0, domain=-sqrt(4):sqrt(4), variable=\t, line width=1.3, color = cb-purple]
    ({t}, {sqrt(6*(4 - t^2))});
    \addplot+[no markers, samples=200, samples y=0, domain=-sqrt(4):sqrt(4), variable=\t, line width=1.3, color = cb-purple]
    ({t}, {-sqrt(6*(4 - t^2))});
    \draw[cb-purple, line width = 1.3] (-3.6,0) -- (3.4,0);

    \addplot [only marks, mark=x, line width = 1.3, mark size = 3pt] table {
    0  0
    2.44949  0
    -2.44949  0
    0  4.89898
    0  -4.89898
    };
    \addplot [only marks, mark=square*, mark size = 2pt] table {
    1.93649  1.22474
    -1.93649  1.22474
    1.93649  -1.22474
    -1.93649  -1.22474
    };

    \node at (2.3,-5.92) [rectangle,draw,fill = cb-palegrey,inner sep=0.8pt] {$\lambda = \frac{1}{2}$};
    \end{axis}
\end{tikzpicture}
\,
\begin{tikzpicture}
  \begin{axis}[
    width=1.8in,
    height=2.5in,
    axis lines=center,
    xlabel=\empty,
    ylabel=\empty,
    ylabel style={above right},
    xlabel style={below right},
    xtick={0,-3,3},
    ytick={0,-1,1},        
    xmin=-3.7,xmax=3.7,ymin=-1.5,ymax=1.5]
    \addplot+[no markers, samples=200, samples y=0, domain=-sqrt(6):sqrt(6), variable=\t, line width=1.3, color = cb-green]
    ({t}, {sqrt(6 - t^2)/sqrt(12)});
    \addplot+[no markers, samples=200, samples y=0, domain=-sqrt(6):sqrt(6), variable=\t, line width=1.3, color = cb-green]
    ({t}, {-(sqrt(6-t^2)/sqrt(12)});
    \draw[cb-green, line width = 1.3] (0,-1.48) -- (0, 1.44);

    \addplot+[no markers, samples=200, samples y=0, domain=-sqrt(1/2):sqrt(1/2), variable=\t, line width=1.3, color = cb-purple]
    ({t}, {1/2*sqrt(3/2)*sqrt(1 - 2*t^2)});
    \addplot+[no markers, samples=200, samples y=0, domain=-sqrt(1/2):sqrt(1/2), variable=\t, line width=1.3, color = cb-purple]
    ({t}, {-1/2*sqrt(3/2)*sqrt(1 - 2*t^2)});
    \draw[cb-purple, line width = 1.3] (-3.6,0) -- (3.4,0);
    
    \addplot [only marks, mark=x, line width = 1.3, mark size = 3pt] table {
    0  0
    2.44949  0
    -2.44949  0
    };
    \addplot [only marks, mark=square*, mark size = 2pt] table {
    0  0.612372
    0  -0.612372
    };

    \node at (2.3,-1.28) [rectangle, draw, fill = cb-palegrey, inner sep=1.5pt] {$\lambda = 4$};
    \end{axis}
\end{tikzpicture}

    
\fi
    \caption{The system \eqref{eq:system_crit_pts} for the function $V$ from Example \ref{ex:Ising_numerical_An}, for the values $\lambda = \frac{1}{2}, \frac{1}{4}, 4$, from left to right. The solutions are marked in black. The solutions that are (equally) closest to (but distinct from) the origin, are marked with squares. Notice the different scaling in the $y$-axis, for the sake of clarity.}
    \label{fig:sing_various_lambdas}
\end{figure}
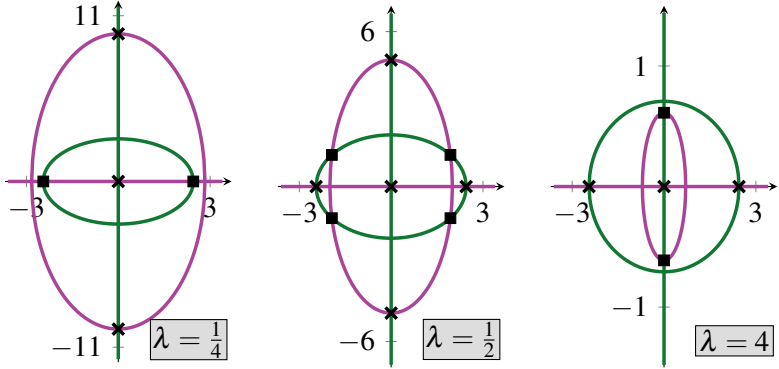
We exhibit the connection between the two collections $\Phi$ and $\Psi$ of critical points explicitly in our running example.
\begin{example}\label{ex:Ising_numerical_An}
    Let $V(x,y) = \frac{x^4}{4!} + \lambda \frac{x^2 y^2}{4} + \lambda^2 \frac{y^4}{4!}$ and $g(x,y) = -\frac{x^2}{2} - \frac{y^2}{2} + V(x,y)$.
    Consider the system of critical equations for $g$
    \begin{equation}\label{eq:system_crit_pts}
        {\color{cb-green} w = \frac{\partial V}{\partial w}(w,z)}, \quad
        {\color{cb-purple} z = \frac{\partial V}{\partial z}(w,z)},
    \end{equation}
    and its complex non-trivial solutions, for $\lambda>0$:
    \[
     (\pm \sqrt{6}, 0), \left(0,\pm \frac{\sqrt{6}}{\lambda}\right), \left( \pm \sqrt{\frac{9-3\lambda}{4\lambda}}, \pm \frac{\sqrt{9 \lambda -3}}{2\lambda} \right).
    \]
    Among these solutions, some are real for every $\lambda>0$. 
    The last type of singular points is real if and only if $\lambda \in [\frac{1}{3}, 3]$. We get
    \[
    \Psi = \begin{cases}
        (\pm\sqrt{6}, 0 ) & 0 < \lambda < \frac{1}{3}, \\
        \left(\pm \sqrt{\tfrac{9-3\lambda}{4\lambda}}, \pm \tfrac{\sqrt{9 \lambda -3}}{2\lambda} \right) & \frac{1}{3} < \lambda < 3, \\
        \left(0, \pm\tfrac{\sqrt{6}}{\lambda} \right) & \lambda > 3. 
    \end{cases}
    \]
    This is displayed in Figure \ref{fig:sing_various_lambdas}. The reader may check that rescaling each point in $\Psi$ to unit vector gives precisely two of the points in \eqref{eq:critV}.
\end{example}

\begin{remark}
    Lee--Yang theory studies the location of the roots of the polynomials $A_n$, when $n$ becomes large. This fascinating theory touches combinatorics, statistics and physics (see, e.g.,~\cite{MR2534100} for an overview). In the spirit of Lee--Yang theory, the two phase transitions $\lambda=\frac{1}{3},3$ in the running example can be detected also by looking at the asymptotic behavior of the roots of $A_n(\lambda)$ as $n\to \infty$.
    Using our algorithm from Proposition \ref{prop:algo}, we can compute the polynomials $A_n(\lambda)$ and find their roots numerically. This is the content of Figure \ref{fig:rootsAn}. The roots of these polynomials are all complex (except for $\lambda=-1$, for odd $n$) but they get closer and closer to the real values $\lambda=\frac{1}{3}$ and $\lambda=3$. 
    \begin{figure}[ht]
        \centering
\ifkeepslowthings
    \includegraphics[width=0.7\linewidth]{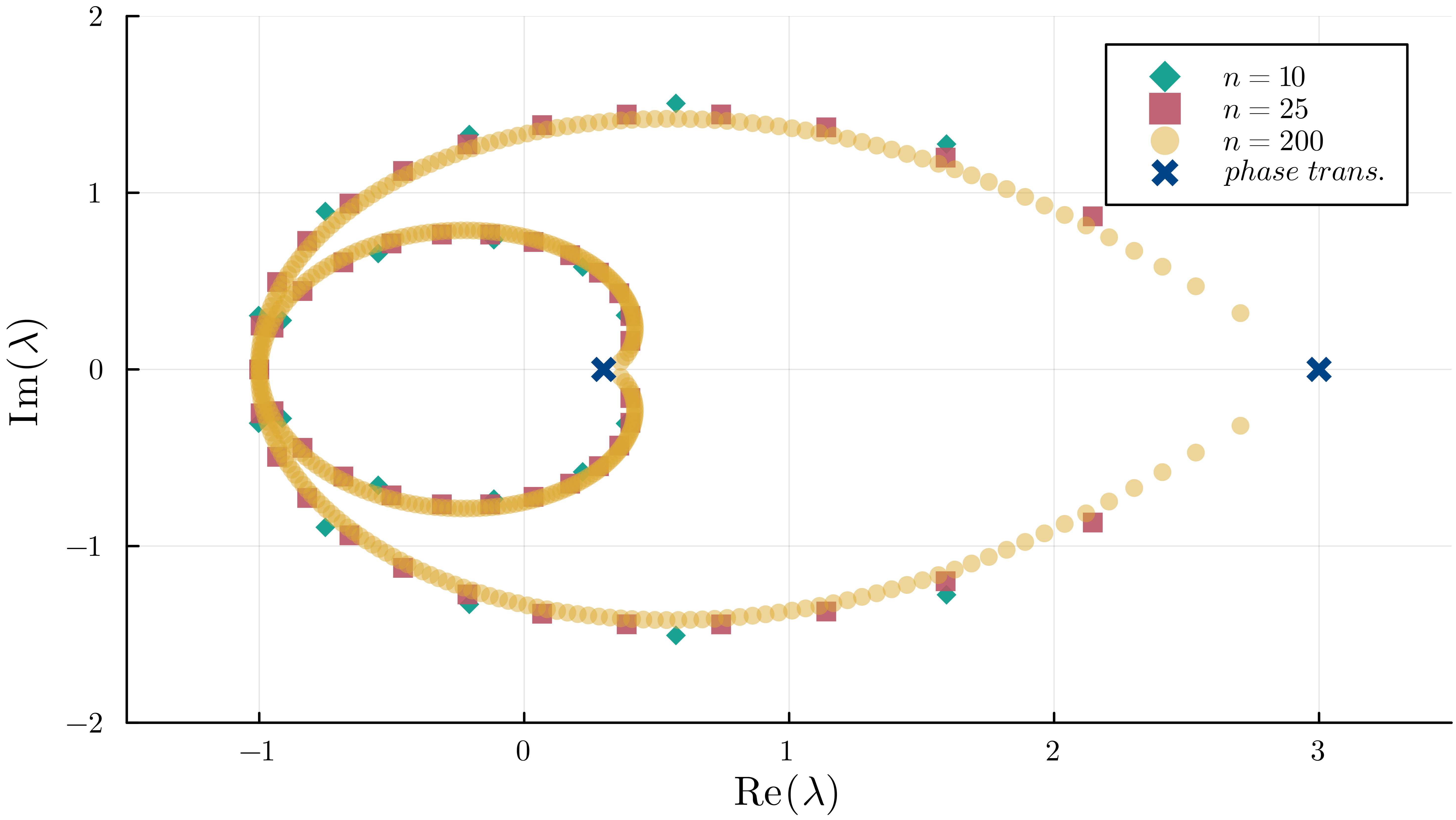}
\fi
        \caption{The roots of $A_n(\lambda)$ under the assumptions of Example \ref{ex:Ising_model_deg4_part1}, for $n=10, 25, 200$. The blue crosses are the phase transitions $\lambda=\frac{1}{3}, 3$.}
        \label{fig:rootsAn}
    \end{figure}
\end{remark}

Although Proposition \ref{prop:AnAsy} and Theorem \ref{thm:crit_g} assume $V$ to be homogeneous, the following example shows that this condition does not seem to be necessary. 
\begin{example}\label{ex:inhomogeneous}
    Take the inhomogeneous polynomial $V(x,y) = \frac{x^3}{3!} + \lambda \frac{x y^2}{2} + \lambda^2 \frac{y^4}{4!}$, with $\lambda>0$. 
    For $\lambda<\frac{1}{2}$, one can compute that $\Psi = \{(2,0)\}$; 
    for $\lambda>\frac{1}{2}$, one gets
    \[
        \Psi = \left\lbrace \left( \tfrac{4 \lambda-\sqrt{2\lambda(8 \lambda-3)}}{\lambda}, \pm \sqrt{6} \sqrt{\tfrac{1 - (4 \lambda-\sqrt{2\lambda(8 \lambda-3)}\,)}{\lambda^{2}}}\right) \right\rbrace.
    \]
    The formula for $A_n$ from Theorem \ref{thm:crit_g} would give 
    \begin{center}
    {\def\arraystretch{1.8}
    \begin{tabular}{c|c|c}
         & $\alpha(\lambda)$ & $c(\lambda)$ \\
        \hline
       $0<\lambda<\tfrac{1}{2}$  & $\frac{3}{2}$ & $\frac{1}{2\pi \sqrt{1-2\lambda}}$ \\
       $\lambda>\frac{1}{2}$  & $\frac{6 \lambda^2}{(8 \lambda-3) \left(16 \lambda-3 -4 \sqrt{2\lambda (8 \lambda-3)}\right)}$ & $\frac{1}{\pi} \sqrt{\frac{\lambda}{32 \lambda^2-12\lambda +2 \sqrt{2 \lambda (8 \lambda-3)(1 - 16 \lambda^2)}}}$ \\
    \end{tabular}
    }
\end{center}
This matches our numerical computations, see \cite{mathrepo}.
\end{example}
Based on the previous example and similar computations, we conjecture that Theorem \ref{thm:crit_g} is also valid for inhomogeneous $V(x,y)$, i.e., graphs that are not necessarily regular. 
\begin{conjecture}
    Let $g$ and $A_n$ be related as in the beginning of Section~\ref{sec:laplace}
    (i.e., $g(x,y)+\frac{x^2}{2} +\frac{y^2}{2}$ is not necessarily homogeneous). Let $\Psi$ be defined as before and assume that all points in $\Psi$ are non-degenerate. Then, 
    $$
    A_n \sim 
    \frac{1}{2\pi}
    \Gamma(n)
    \sum\limits_{(w,z) \in \Psi} \frac{(-g(w,z))^{-n}}{\sqrt{-\det H_g(w,z)}}
    \qquad \text{ as } \quad n \rightarrow \infty.
    $$
\end{conjecture}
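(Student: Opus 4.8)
\noindent\emph{Sketch of a possible approach.}
The reason Theorem~\ref{thm:crit_g} does not immediately cover the inhomogeneous case is that the reduction to a one-dimensional angular integral in Lemma~\ref{lmm:AnHomo} relied crucially on the homogeneity of $V$; for a general polynomial $g$ there is no such shortcut, and one is forced into a genuinely two-dimensional saddle-point analysis. The plan is to recast the statement in the language of Borel resummation: view $J(z) := \sum_{n\geq 0} A_n z^{-n}$ as the asymptotic expansion of $I(z)$ and read off the large-$n$ behaviour of $A_n$ from the singularities of its Borel transform, which — by the resurgence philosophy — should sit at the critical values of $g$.

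The first step is to make the Laplace estimate of Proposition~\ref{prop:laplace} quantitative in $n$: using analyticity of $g$ near the origin together with Cauchy estimates on the $\ol_{u,w}$, one obtains a Gevrey-$1$ bound $A_n = \bigO(C^n\,n!)$ for some $C>0$. Hence the modified Borel transform $\widehat J(\xi) := \sum_{n\geq 1} \frac{A_n}{\Gamma(n)}\,\xi^{n}$ is holomorphic near $0$, and, since $A_n = \Gamma(n)\,[\xi^n]\widehat J$, the conjecture becomes equivalent to a statement about $\widehat J$: it continues holomorphically to the punctured disc of radius $\rho := \min\{\,|g(w,z)| \,:\, (w,z)\in\crit_{\C\cdot\R^2}g\setminus\{\mathbf 0\}\,\}$; its only singularities on $|\xi|=\rho$ are simple poles located at the values $-g(w,z)$ for $(w,z)\in\Psi$; and the residue at $-g(w,z)$ equals $\tfrac{g(w,z)}{2\pi}\,\big/\sqrt{-\det H_g(w,z)}$. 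Given this, pushing the Cauchy contour for $[\xi^n]\widehat J$ out to radius $\rho$ and collecting the pole contributions yields \eqref{eq:niceFormula} directly.

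The analytic heart of the argument — and the step I expect to be the main obstacle — is locating these Borel singularities. I would complexify $I(z)$, realise the real domain $D$ as a two-real-dimensional cycle in $\C^2$, and deform it (Morse theory for $\operatorname{Re}(z\,g)$) into a combination of Lefschetz thimbles attached to the critical points of $g$. The thimble through the origin reproduces the Borel sum of $J$, while each other relevant thimble, attached to a critical point $p$, contributes a term $\propto e^{z\,g(p)}\big(-\det H_g(p)\big)^{-1/2}\big(1+\bigO(z^{-1})\big)$, the leading factor being the Gaussian integral at $p$; the Picard--Lefschetz/Stokes data relating these contributions then translate, via the standard dispersion relation, into a pole of $\widehat J$ at $-g(p)$ of exactly the predicted strength. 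Three points will need care: (i) deciding which thimbles are ``visible'' from the real cycle $D$ — this is where the constraint $(w,z)\in\C\cdot\R^2$ should enter, as the admissible deformations only ever sweep across critical points with real coordinate ratio; (ii) excluding singularities closer than $\rho$ coming from critical points at infinity or from the behaviour of $g$ on $\partial D$, for which the hypotheses on $D$ and the sign of the quadratic part of $g$ should suffice but must be verified; and (iii) confirming that the contributing singularities are simple poles, equivalently that the correction to $A_n/\Gamma(n)$ is $\bigO(\rho^{-n}/n)$, which once more follows from non-degeneracy of the points of $\Psi$.

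A cleaner organisation of the bookkeeping, though not a shortcut past (i)--(iii), would be to extend Borinsky's calculus of factorially divergent power series \cite{Borinsky:2018mdl} from one to two variables: equip $\R[[z^{-1}]]$ with the ``asymptotic derivative'' that records the coefficient of $e^{z\,g(p)}$, and combine Theorem~\ref{thm:laplace-graphs} with a bivariate version of the graph/Gaussian-moment dictionary to compute this derivative; the homogeneous result and Example~\ref{ex:inhomogeneous} strongly suggest the outcome is precisely the sum over $\Psi$ predicted above.
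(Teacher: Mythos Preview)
The statement you are addressing is a \emph{conjecture} in the paper, not a theorem: the authors provide no proof, only the numerical evidence of Example~\ref{ex:inhomogeneous} and the remark preceding the conjecture that ``similar computations'' support the formula. There is therefore no paper proof against which to compare your proposal.

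Your sketch is a sensible outline of the resurgence/Lefschetz-thimble programme one would naturally attempt, and you have correctly diagnosed both why the paper's own argument (the angular-integral reduction of Lemma~\ref{lmm:AnHomo}) does not extend and where the genuine difficulty lies. But the items you flag as ``needing care'' are precisely the missing ideas, not technicalities: your point (i) --- justifying that only critical points in $\C\cdot\R^2$ are visible from the real integration cycle --- is the crux of the connection problem the authors allude to via \cite{berry1990hyperasymptotics}, and nothing in your outline indicates how to resolve it. Note also that the domain $D$ in Section~\ref{sec:laplace} is merely a subset of $\R^2$ with boundary, not a closed cycle in $\C^2$, so even setting up the thimble decomposition requires a non-trivial treatment of $\partial D$ before one can speak of Stokes data. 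Your proposal is candid about being a sketch, and that is the appropriate stance: it is a plausible research plan rather than a proof, which is consistent with the statement's status in the paper.
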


\bibliography{references.bib}

\end{document}